\documentclass[letterpaper, 10 pt, conference]{ieeeconf}  

\IEEEoverridecommandlockouts                              

\usepackage{todonotes}
\usepackage{latexsym,amssymb,amsmath, amsbsy,amsopn, amstext}
\usepackage{graphicx,epsfig,tikz,pgf,hyperref,color,ifpdf}
\usepackage{mathrsfs}
\usepackage{amsmath,  amssymb,stfloats}
\usepackage{subcaption,enumerate}
\usepackage{tikz}
\usepackage{cite}
\usepackage[ruled,vlined,linesnumbered]{algorithm2e}
\usepackage{bm}
\usepackage{etoolbox}
\makeatletter
\patchcmd{\@makecaption}
  {\scshape}
  {}
  {}
  {}
\makeatother
\usepackage{makecell}

\makeatletter
\patchcmd{\@algocf@start}
  {-1.5em}
  {0pt}
  {}{}
\makeatother

\graphicspath{{./}}

\newtheorem{definition}{\bfseries Definition}
\newtheorem{proposition}{\bfseries Proposition}
\newtheorem{example}{\bfseries Example}
\newtheorem{theorem}{\bfseries Theorem}
\newtheorem{lemma}{\bfseries Lemma}

\newtheorem{remark}{\bfseries Remark}

\DeclareMathOperator{\itl}{interval}

\def\v{\bm{v}}

\def\x{\bm{x}}

\def\u{\bm{u}}

\def\h{\bm{h}}
\def\y{\bm{y}}
\def\W{\bm{W}}
\newcommand{\mm}[1]{\mathcal{#1}}
\newcommand{\mt}[1]{\boldsymbol{#1}}

\newcommand{\li}{[\![}
\newcommand{\ri}{]\!]}

\newcommand{\cc}{\mathbf{c}}
\newcommand{\G}{\mathbf{G}}
\newcommand{\A}{\mathbf{A}}
\newcommand{\bb}{\mathbf{b}}

\newcommand{\Zc}{\mathcal{Z}}
\newcommand{\X}{\mathcal{X}}

\newcommand{\mat}[1]{\begin{bmatrix} #1 \end{bmatrix}}
\newcommand{\hz}[1]{\langle \G^c_{#1},\allowbreak \G^b_{#1},\allowbreak \cc_{#1},\allowbreak \A^c_{#1},\allowbreak \A^b_{#1},\allowbreak \bb_{#1} \rangle}

\newcommand{\lrangle}[1]{\langle #1 \rangle}

\newcommand{\seqt}[2]{\{{#1}_{t}\}_{t=1}^{#2}}

\allowdisplaybreaks[4]

\usepackage{xcolor}
\newif\ifdraft

\usepackage{caption}

\drafttrue

\title{\LARGE \bf Forward and Backward Reachability Analysis of Closed-loop Recurrent Neural Networks via Hybrid Zonotopes}

\author{Yuhao Zhang and Xiangru Xu
\thanks{Y. Zhang and X. Xu are with the Department of Mechanical Engineering, University of Wisconsin-Madison, Madison, WI, USA. Email:         {\tt\small \{yuhao.zhang2,xiangru.xu\}@wisc.edu}.}%
}

\begin{document}
\maketitle
\begin{abstract}
Recurrent neural networks (RNNs) are widely employed to model complex dynamical systems due to their hidden-state structure, which inherently captures temporal dependencies. This work presents a hybrid zonotope–based approach for computing exact forward and backward reachable sets of closed-loop RNN systems with ReLU activation functions. The method formulates state-pair sets to compute reachable sets as hybrid zonotopes without requiring unrolling. To improve scalability, a tunable relaxation scheme is proposed that ranks unstable ReLU units across all layers using a triangle-area score and selectively applies convex relaxations within a fixed binary limit in the hybrid zonotopes. This scheme enables an explicit trade-off between computational complexity and approximation accuracy, with exact reachability as a special case. In addition, a sufficient condition is derived to certify the safety of closed-loop RNN systems. Numerical examples demonstrate the effectiveness of the proposed approach.

\end{abstract}

\section{Introduction}\label{sec:intro}








Neural networks (NNs) have found widespread use in various engineering applications due to their universal function approximation capabilities, which allow them to represent complex functions and system dynamics effectively \cite{hornik1989multilayer}. Unlike feedforward and convolutional neural networks that process inputs independently, Recurrent neural networks (RNNs)  maintain hidden states that explicitly encode temporal dependencies, making them well-suited for sequential tasks such as natural language processing \cite{liu2016recurrent}, speech recognition \cite{sak2014long}, and trajectory prediction \cite{alahi2016social}. In control-related applications, RNNs have been widely employed for modeling uncertain dynamics, estimating hidden states, and controlling dynamical systems, where outputs depend not only on current inputs but also on historical context \cite{bonassi2022recurrent}.

Despite their increasing popularity, RNNs are known to suffer from issues such as the exploding gradient problem \cite{bengio1994learning} and high sensitivity to input perturbations \cite{cheng2020seq2sick}. However, the theoretical properties of RNNs, such as robustness, verifiability, and interpretability, remain relatively underexplored, raising significant safety concerns when RNNs are deployed in safety-critical applications. In the dynamical system settings, recent research has primarily focused on stability analysis of RNNs. For example, \cite{d2023incremental} established incremental input-to-state stability conditions of RNNs and employed linear matrix inequalities to design stable RNN-based controllers. In \cite{revay2021convex}, a convex parameterization was developed to guarantee stability and robustness via incremental quadratic constraints, while regional stability conditions were studied in \cite{la2025regional}. 

Beyond stability, the verification and robustness analysis of isolated RNNs has also been investigated through reachability analysis. Two main approaches have been considered: unrolling-based methods, which expand an RNN into a large feedforward neural network (FNN), and invariant inference methods, which construct smaller FNN over-approximations. The unrolling approach suffers from poor scalability due to the rapid growth in network size with time steps \cite{akintunde2019verification}, whereas invariant inference offers better scalability but may produce inconclusive results because of accumulated over-approximation errors \cite{jacoby2020verifying}. To address these limitations, a set-propagation-based method using star sets was proposed in \cite{tran2023verification}, with scalability further improved through sparse star sets in \cite{choi2025reachability}.   Despite these advances, backward reachability analysis of RNNs, which is crucial for identifying adversarial input sequences \cite{papernot2016crafting} and enabling safe control synthesis for neural network control systems \cite{zhang2025goalreaching}, remains largely unexplored in the literature.

This work introduces a novel set-based method for computing both forward and backward reachable sets of closed-loop RNN systems using hybrid zonotopes (HZs). The main contributions are threefold: (i) We compute exact forward and backward reachable sets for closed-loop RNNs with ReLU activation functions, represented as HZs, by formulating state-pair sets  without unrolling the RNNs. (ii) To improve scalability, we propose a tunable relaxation scheme that manages the complexity of the resulting HZ-represented reachable sets, by ranking unstable ReLU units across all layers using a triangle-area score and selectively applying convex relaxations  within a fixed limit on binary variables in the hybrid zonotopes. This enables  an explicit trade-off between computational complexity and approximation accuracy, with exact reachability as a special case. (iii) Using the computed reachable sets, we derive a sufficient condition for safety verification and for identifying unsafe sequences in closed-loop RNNs. Numerical examples demonstrate the effectiveness and tunability of the proposed method.

\emph{Notation.} 
The $i$-th component of a vector $\x\in \mathbb{R}^n$ is denoted by $x_{i}$ with $i\in [n]\triangleq \{1,\dots,n\}$. 
The identity matrix in $\mathbb{R}^{n\times n}$ is denoted as $\bm{I}_n$ and $\bm{e}_i$ is the $i$-th column of $\bm{I}_n$. 
The matrices with all zero entries are denoted as $\bm{0}$. 
Given sets $\mathcal{X}\subset \mathbb{R}^n$, $\mathcal{Z}\subset \mathbb{R}^m$ and a matrix $\bm{R}\in\mathbb{R}^{m\times n}$, 
the generalized intersection of $\mathcal{X}$ and $\mathcal{Z}$ under $\bm R$ is $\mathcal{X} \cap_{\bm R}\mathcal{Z} = \{\x\in\mathcal{X}\;|\;\bm R \x\in\mathcal{Z}\}$. 
The interval hull of a set $\mathcal{X}\subset \mathbb{R}^n$ is denoted as $\itl{(\mathcal{X})}\subset \mathbb{R}^n$. 
The affine transformation of $\mm{X}$ given a matrix $\mt{A}$ and vector $\mt{b}$ is $\mt{A}\mm{X}+\mt{b} = \{\mt{Ax} + \mt{b} \mid \mt{x} \in \mm{X}\}$.
An interval with bounds $\underline{\mt{a}}$, $\overline{\mt{a}} \in \mathbb{R}^n$ is denoted as $\li \underline{\mt{a}}, \overline{\mt{a}}\ri$. 

\section{Preliminaries \& Problem Statement}\label{sec:pre}

\subsection{Hybrid Zonotopes}\label{sec:zono}
First, we provide the formal definition of hybrid zonotope (HZ). 
A set $\Zc \subset\mathbb{R}^n$ is a HZ if there exist $\mathbf{c} \in \mathbb{R}^{n}$, $\mathbf{G}^c \in \mathbb{R}^{n \times n_{g}}$, $\G^b\in \mathbb{R}^{n \times n_{b}}$, $\A^c \in \mathbb{R}^{n_{c}\times{n_g}}$, $\A^b \in\mathbb{R}^{n_{c}\times{n_b}}$, $\bb \in \mathbb{R}^{n_{c}}$ such that $\mathcal{Z}=\{ \G^c  \bm{\xi}^c+\G^b \bm{\xi}^b+\cc \mid 
\bm{\xi}^c\in \mathcal{B}_{\infty}^{n_{g}}, \bm{\xi}^b\in\{-1,1\}^{n_{b}}, 
\A^c\bm{\xi}^c+\A^b \bm{\xi}^b=\bb
\}$ where $\mathcal{B}_{\infty}^{n_g}=\left\{\bm{x} \in \mathbb{R}^{n_g} \;|\;\|\bm x\|_{\infty} \leq 1\right\}$ is the unit hypercube in $\mathbb{R}^{n_{g}}$ \cite[Definition 3]{bird2023hybrid}. The constrained generator representation of $\Zc$  is given by $\mathcal{Z}= \hz{}$, where the columns of $\G^b$ are called the binary generators and the columns of $\G^c$ are called the continuous generators. 

In this work, we adopt the identity given in the following lemma for all generalized intersections of HZs.

\begin{lemma}\label{lemma:inter}
For any HZs $\mathcal{Z}=\hz{z} \subset \mathbb{R}^{n}$ and $\mathcal{Y}=\hz{y} \subset \mathbb{R}^{m}$, and matrix $\bm R \in \mathbb{R}^{m \times n}$, the generalized intersection of $\mathcal{Z}$ and  $\mathcal{Y}$ under $\bm R$ is given by:
\end{lemma}
\begin{align}
&\mathcal{Z} \cap_{\bm R} \mathcal{Y} 
=  \langle\mat{ \mathbf{0}& \G_{z}^{c} },\mat{ \mathbf{0}& \G_{z}^{b} }, \cc_{z}, \nonumber\\
&\quad \mat{\A_{y}^{c} & \mathbf{0} \\ \mathbf{0} & \A_{z}^{c} \\ -\G_{y}^{c} & \bm{R}\G_{z}^{c} } ,\mat{\A_{y}^{b} & \mathbf{0} \\\mathbf{0} & \A_{z}^{b} \\ -\G_{y}^{b} & \bm{R}\G_{z}^{b} },
\mat{\bb_{y} \\
\bb_{z} \\
\cc_{y}-\bm{R}\cc_{z} }\rangle.\label{eqhzinter}
\end{align}

The identity for the generalized intersection of HZs given in \eqref{eqhzinter} is slightly different from that given in \cite[Proposition 7]{bird2023hybrid}. Specifically, the equality constraints of the second operand (i.e., $\A^c_y$, $\A^b_y$, and $\bb_y$) are positioned at the top of the constructed matrices. However, it is easy to verify that the HZs generated by these two identities are equivalent.

\subsection{Recurrent Neural Networks}\label{subsec:RNN}
Given an input sequence $\seqt{\x}{T}$ with $\bm{x}_t \in \mathbb{R}^n$, an RNN $\mt{\pi}$ 
with $L$ hidden layers (as shown in Figure \ref{fig:rnn} (a)) computes a sequence of outputs $\seqt{\y}{T}$, for $\ell \in [L]$ and $t \in [T]$, according to
\begin{subequations}\label{equ:RNN}
\begin{align}
    \bm{h}^{(0)}_t & = \x_t, \label{equ:RNN-1}\\
    \bm{h}_t^{(\ell)} &= \sigma_h^{(\ell)}(\bm W_h^{(\ell)} \bm{h}_{t-1}^{(\ell)} + \bm W_x^{(\ell)} \bm{h}_{t}^{(\ell-1)} + \bm{v}_h^{(\ell)} ), \label{equ:RNN-2}\\
    \bm{y}_t &= \sigma_y(\bm W_y \bm{h}_t^{(L)} + \bm{v}_y ),
\end{align}
\end{subequations}
where $\bm{h}_t^{(\ell)}\in \mathbb{R}^{n_\ell}$ is the $t$-step hidden state at the $\ell$-th layer; $\bm W_h^{(\ell)}\in \mathbb{R}^{n_\ell\times n_\ell}$, $\bm W_x^{(\ell)}\in \mathbb{R}^{n_\ell\times n_{\ell-1}}$ and $\bm v_h^{(\ell)}\in \mathbb{R}^{n_\ell}$ are the $\ell$-th layer hidden state matrix, input matrix and hidden bias vector, respectively; $\bm W_y \in \mathbb{R}^{m\times n_L}$ and $\bm{v}_y\in \mathbb{R}^{m}$ are the output matrix and output bias vector; $\sigma_h^{(\ell)}$ and $\sigma_y$ are the activation functions. In this work, we assume all the initial hidden states are zeros (i.e., $\bm{h}_0^{(\ell)} = \bm 0$) and all the activation functions are ReLU functions. However, the proposed methods can be easily extended to non-zero initial hidden states and other types
of activation functions by using their HZ approximation as in \cite{zhang2024hybrid}.




\begin{figure}[!t]
    \centering
    \begin{subfigure}[t]{0.2145\linewidth}
        \centering
        \includegraphics[width=0.9\linewidth]{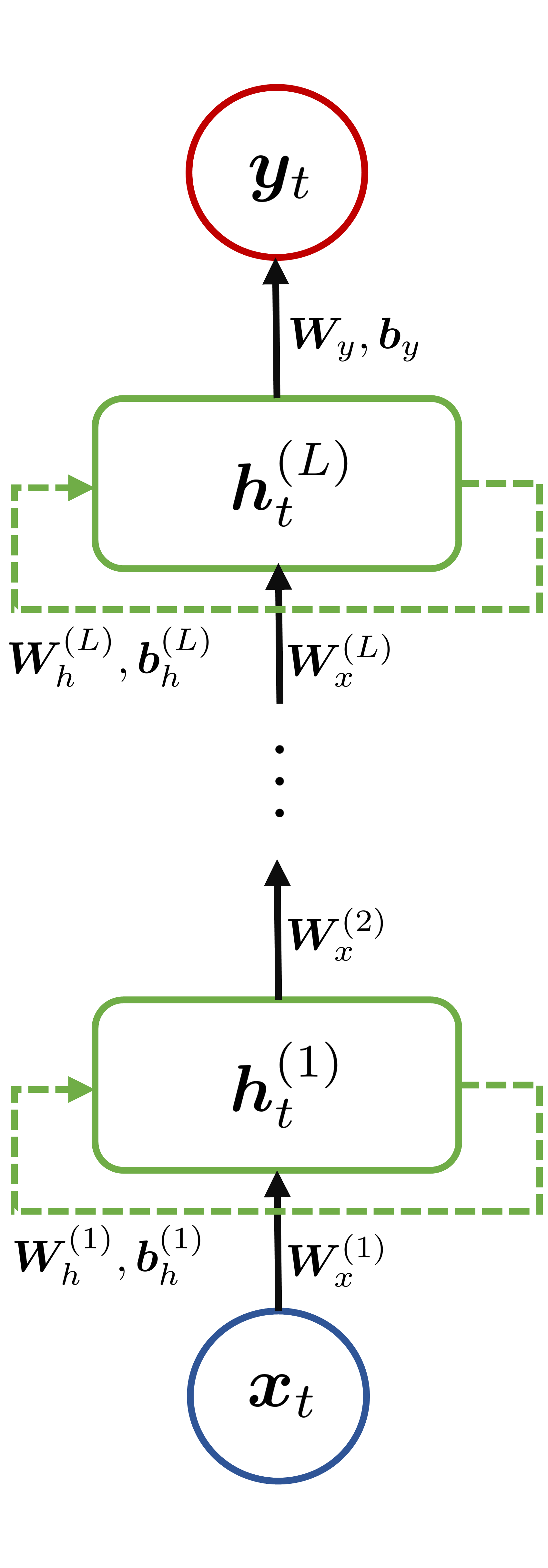}
        \caption{RNN}
        \label{fig:rnn1}
    \end{subfigure}
    \hfill
    \begin{subfigure}[t]{0.7255\linewidth}
        \centering
        \includegraphics[width=0.9\linewidth]{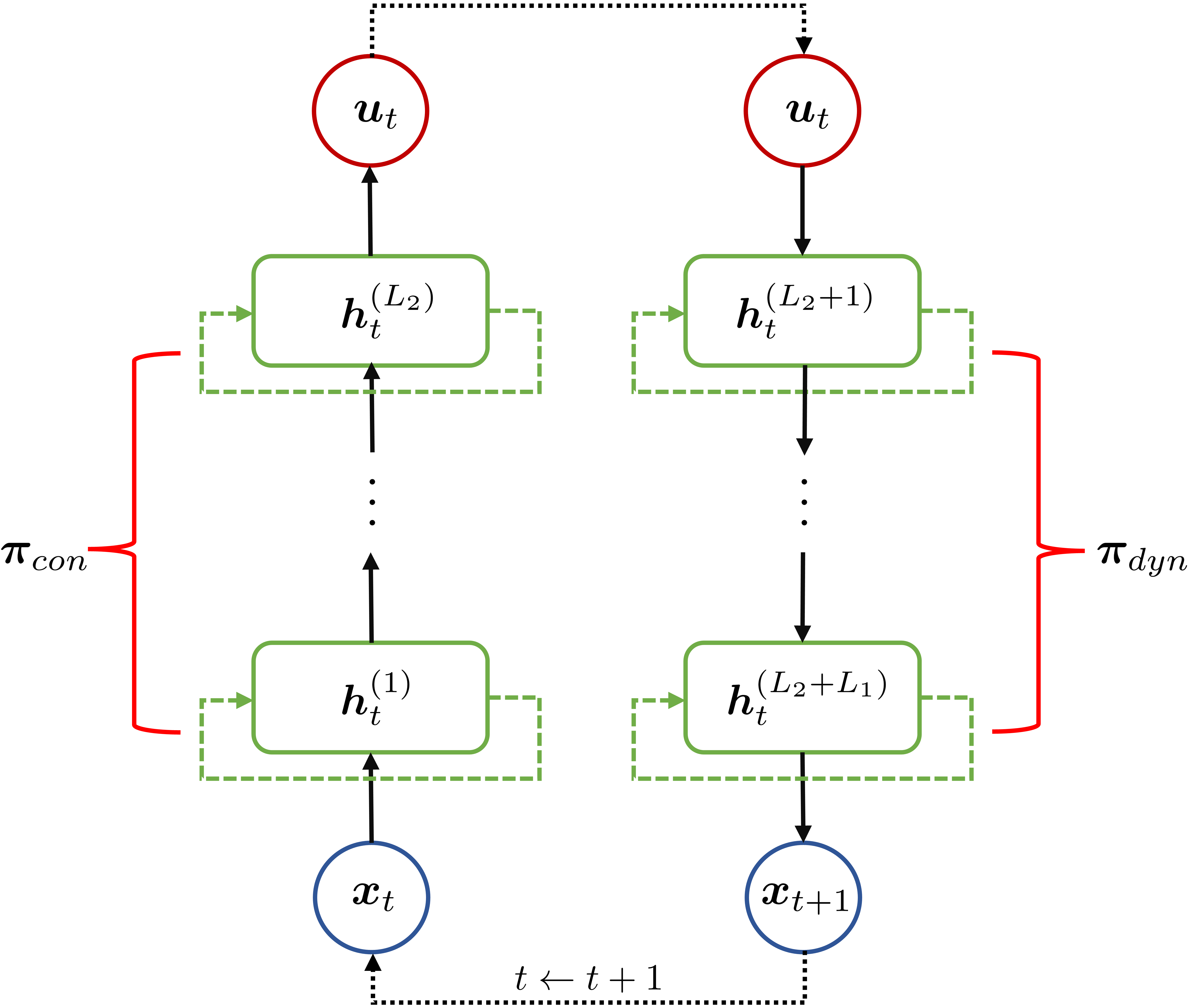}
        \caption{Closed-loop RNN system}
        \label{fig:rnn2}
    \end{subfigure}
    \caption{(a) Diagram of an isolated $L$-layered  RNN. (b) Diagram of a closed-loop RNN system comprising of an RNN plant and an RNN controller.}
    \label{fig:rnn}
\end{figure}

\subsection{Problem Statement}
Consider the following discrete-time nonlinear plant:
\begin{align}
    \x_{t+1} & = f(\x_t,\u_t), \label{equ:state-space}
\end{align}
and a controller:
\begin{align}
    \bm{u}_t &= q(\x_t,\u_{t-1}),\label{equ:control}
\end{align}
where $\x$ denotes the state and $\u$ the control input. We assume that the plant dynamics \eqref{equ:state-space} are approximated by an $L_1$-layered RNN, denoted as $\x_{t+1} = \mt{\pi}_{dyn}(\u_t)$, and the controller \eqref{equ:control} is approximated by an $L_2$-layered RNN, denoted as $\u_t = \mt{\pi}_{con}(\x_t)$ \cite{d2023incremental,bonassi2022recurrent,la2025regional}.  The two RNNs $\mt{\pi}_{dyn}$ and $\mt{\pi}_{con}$ can be connected by relating their input and output. The resulting compositional system forms an $L$-layered RNN, denoted by $\mt{\pi}$, where $L = L_1+L_2$ (see Figure \ref{fig:rnn} (b)). 
Compared to an isolated RNN \eqref{equ:RNN} whose inputs and outputs may be externally specified and unconnected, the RNN $\mt{\pi}$ takes the output from the last timestep as the current timestep input, i.e., 
\begin{equation}\label{equ:RNN2}
\bm{x}_{t+1}  = \bm{y}_{t} = \sigma_y(\bm W_y \bm{h}_t^{(L)} + \bm{v}_y ).
\end{equation}
We call the resulting feedback control system $\mt{\pi}$ the  \emph{closed-loop RNN system}, and  denote it as 
\begin{align}\label{equ:sys}
    \x_{t+1} = \mt{\pi}(\x_t),
\end{align}
where $\x_t \in \X \subset \mathbb{R}^n$ is the state of the closed-loop system at time $t$ and $\mathcal{X}$ is the state domain. 
We use the notation $\mt{\pi}_h(\x_1,t,\ell)$ to denote the hidden state value in the $\ell$-th layer of $\mt{\pi}$ and at the $t$-th step, for a given initial state $\x_1 \in \X$, by \eqref{equ:RNN-1}, \eqref{equ:RNN-2} and \eqref{equ:RNN2}. Specifically, $\mt{\pi}_h(\x_1,t,\ell)\triangleq \h^{(\ell)}_t \in \mathbb{R}^{n_\ell}$ from $\h_1^{(0)} = \x_1, \h_0^{(r)} = \bm 0, \bm{h}_k^{(r)} = \sigma_h^{(r)}(\bm W_h^{(r)} \bm{h}_{k-1}^{(r)} + \bm W_x^{(r)} \bm{h}_{k}^{(r-1)} + \bm{v}_h^{(r)} ), \h^{(0)}_{k+1} = \sigma_y(\bm W_y \bm{h}_k^{(L)} + \bm{v}_y )$, for $k \in [t-1]$ with $r \in [L]$, and for  $k = t$ with  $r \in [\ell]$.

Given an initial set $\X_1 \subseteq \X$, the (exact) \emph{forward reachable set} (FRS) of the closed-loop RNN \eqref{equ:sys} at step $t$ is defined as $\mathcal{R}_t(\X_1) = \{\x_t \in \mathbb{R}^n\;|\;\x_1\in \X_1, \x_{k+1} = \mt{\pi}(\x_k), k \in [t-1]\}$. Similarly, given a target set $\mathcal{T} \subseteq \X$, the (exact) \emph{backward reachable set} (BRS) of the closed-loop RNN \eqref{equ:sys} at step $t$ is defined as $\mathcal{P}_t(\mathcal{T}) = \{\x_1 \in \mathcal{X}\;|\;\x_t\in \mathcal{T}, \x_{k+1} = \mt{\pi}(\x_k), k \in [t-1]\}$. 



In this work, we aim to address the following problem: \emph{Given an initial set $\X_1 \subseteq \X$, where both $\X_1$ and $\X$ are  represented as HZs, compute the $t$-step exact and over-approximated FRSs for the system \eqref{equ:sys}, expressed in the form of HZs. Similarly, given an target set $\mathcal{T} \subseteq \X$, where $\mathcal{T}$ is an HZ, compute the $t$-step exact and over-approximated BRSs for the system \eqref{equ:sys}, in the form of HZs.}

\section{Main Results}\label{sec:RNN}

In this section, we first introduce the concept of state-pair sets and present an approach to construct HZ state-pair sets that represent the relationship between the initial state $\x_1$ and the $t$-step state $\x_t$ for $t\in \mathbb{Z}_{>1}$. Building on this formulation, we then compute the FRSs and BRSs of the closed-loop RNN \eqref{equ:sys}, and derive a sufficient condition for safety verification.

\subsection{State-Pair Set}

For FNNs, such as multi-layer perceptrons and convolutional neural networks, it has been shown that their input-output relationships can be represented as HZs by propagating input sets layer-by-layer through the network  \cite{zhang2024reachability,zhang2025efficient}. However, as defined in \eqref{equ:RNN}, RNNs exhibit interconnections between two hidden states across both layers and time steps (i.e., $\h^{(\ell)}_{t-1}$ and $\h^{(\ell-1)}_{t}$). These temporal and structural dependencies prevent direct application of FNN-based reachability methods to RNNs. To address this challenge, we introduce two new sets that explicitly encode these interconnections.

We first define the \emph{state-pair set} of the closed-loop RNN system \eqref{equ:sys} with state domain set $\X$.
\begin{definition}
    The state-pair set $\mathcal{S}_{x}(\X,t)\subset \mathbb{R}^{2n}$ at timestep $t\in\mathbb{Z}_{\geq 2}$ with respect to the state domain $\X$ is 
    \begin{align*}
         \mathcal{S}_{x}(\X,t) = \{(\x_1,\x_t)\;|\; & \x_1\in \X, \x_{k+1} = \mt{\pi}(\x_k), k \in [t-1]\}.
    \end{align*}
    \vskip 1mm
\end{definition}

From the definition above, it is evident that the state-pair set preserves the relationship between the initial state and the $t$-step state of the closed-loop RNN. 

Similarly, we define the \emph{hidden-state-pair set} for the closed-loop RNN system \eqref{equ:sys}, which encodes all possible transitions between the connected hidden states.

\begin{definition}
    The hidden-state-pair set $\mathcal{S}_{h}(\X,t,\ell)\subset \mathbb{R}^{n_{\ell-1}+n_\ell}$ with respect to the state domain  $\X$ is 
    \begin{align*}
         \mathcal{S}_{h}(\X,t,\ell) = \{ & (\h^{(\ell)}_{t-1},\h^{(\ell-1)}_{t}) \;|\;  \h^{(\ell)}_{t-1} = \mt{\pi}_h(\x_1,t-1,\ell), \\ & \h^{(\ell-1)}_{t} = \mt{\pi}_h(\x_1,t,\ell-1), \x_1 \in \X\}.
    \end{align*}
    \vskip 1mm
\end{definition}

Next, we define a new operation on two HZs with special structures, which will be used to construct both the state-pair set and the hidden-state-pair set.

\begin{definition}\label{def:cp}
    Let $\mathcal{Z}=\hz{z} \subset \mathbb{R}^{n_x}$ and $\mathcal{Y}=\hz{y} \subset \mathbb{R}^{n_y}$ be two HZs. 
    Suppose that matrices $\A^c_y$, $\A^b_y$, and $\bb_y$ can be partitioned as 
    \begin{align}\label{equ:cp}
        \A^c_y = \mat{\A^c_z & \bm 0\\ \multicolumn{2}{c}{\bar{\A}^c_y}},\;\A^b_y = \mat{\A^b_z & \bm 0\\ \multicolumn{2}{c}{\bar{\A}^b_y}},\;\bb_y = \mat{\bb_z\\\bar{\bb}_y}.
    \end{align}
    Then we define the \emph{constrained product} of $\mathcal{Z}$ and $\mathcal{Y}$ as
    \begin{align}
        \mathcal{C}(\mathcal{Z},\mathcal{Y}) \triangleq \langle \mat{\G^c_z & \bm 0\\ \multicolumn{2}{c}{{\G}^c_y}}, \mat{\G^b_z & \bm 0\\ \multicolumn{2}{c}{{\G}^b_y}}, \mat{\cc_z\\ \cc_y}, \A^c_y, \A^b_y, \bb_y \rangle.
    \end{align}
    \vskip 2mm
\end{definition}

The definition above shows that the constrained product operation combines the equality constraints of two HZs, resulting in an HZ that is always a subset of their Cartesian product, i.e., $\mathcal{C}(\mathcal{Z},\mathcal{Y}) \subseteq \mathcal{Z} \times \mathcal{Y}$. We call two HZs well-defined for constrained product if they satisfy the condition \eqref{equ:cp} in Definition \ref{def:cp}. 



Since this work considers ReLU activation functions, we utilize the following lemma that states the graph of the ReLU activation function can be exactly represented as an HZ.

\begin{lemma}\cite[Lemma 2]{zhang2024reachability}\label{lem:relu}
    Given an HZ $\mathcal{Z}\subset \mathbb{R}^{n_{\ell}}$ and its interval hull $\mathcal{I}\triangleq \li\mt{\alpha},\mt{\beta}\ri = \itl(\mathcal{Z})$, then the graph of the vector-valued ReLU activation function $\sigma: \mathbb{R}^{n_{\ell}}\rightarrow \mathbb{R}^{n_{\ell}}$ over $\mathcal{Z}$ can be exactly represented as 
\begin{equation}\label{equ:g_phi}
    \mathcal{G}_{\sigma}(\mathcal{Z}) = (\bm{P}\cdot \mathcal{G}_{ReLU}(\mathcal{I}))\cap_{[\bm{I}\; \bm{0}]} \mathcal{Z},
\end{equation}
where $\bm{P}=[\bm{e}_{2}\; \bm{e}_{4}\;\cdots\; \bm{e}_{2n_{\ell}} \; \bm{e}_{1}\; \bm{e}_{3}\;\cdots\; \bm{e}_{2n_{\ell}-1}]^T\in \mathbb{R}^{2n_{\ell}\times 2n_{\ell}}$ is a permutation matrix, $\mathcal{G}_{ReLU}(\mathcal{I}) = \mathcal{G}_{ReLU}(\li\alpha_1,\beta_1 \ri) \times \cdots \times \mathcal{G}_{ReLU}(\li\alpha_{n_\ell},\beta_{n_\ell}\ri)$ and
\begin{align*}
&{\mathcal{G}}_{ReLU}(\li\alpha_i,\beta_i\ri) =\nonumber\\
&\left\{ \begin{array}{l}
\mathcal{H}_+\triangleq \lrangle{\mat{\frac{\beta_i-\alpha_i}{2} \\ \frac{\beta_i-\alpha_i}{2}},\emptyset,\mat{\frac{\beta_i+\alpha_i}{2} \\ \frac{\beta_i+\alpha_i}{2}},\emptyset,\emptyset,\emptyset}, \;\; \mbox{if} \; 0\leq  \alpha_i,\\ 
\mathcal{H}_-\triangleq \lrangle{\mat{\frac{\beta_i-\alpha_i}{2} \\ 0}, \emptyset,\mat{\frac{\beta_i+\alpha_i}{2} \\ 0}, \emptyset,\emptyset,\emptyset}, \;\; \mbox{if}\; \beta_i \leq 0,\\
\mathcal{H}_\pm\triangleq = \hz{\pm}, \;\; \mbox{if}\; \alpha_i <0<\beta_i,
\end{array}\right.
\end{align*} 
for $i\in[n_\ell]$, and the expressions of $\G^c_\pm, \G^b_\pm, \cc_\pm, \allowbreak\A^c_\pm,\allowbreak \A^b_\pm, \allowbreak\bb_\pm$ can be found in \cite[eq. (3)]{zhang2023backward}. 
\end{lemma}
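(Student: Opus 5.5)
The statement to prove is the last one before the break, Lemma~\ref{lem:relu}, which is cited from \cite[Lemma 2]{zhang2024reachability}. The plan is a direct set-equality argument: show that the right-hand side of \eqref{equ:g_phi} equals $\mathcal{G}_\sigma(\mathcal{Z})=\{(\x,\sigma(\x))\mid \x\in\mathcal{Z}\}$.

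\textbf{Scalar case.} For each coordinate $i$, I will first show that $\mathcal{G}_{ReLU}(\li\alpha_i,\beta_i\ri)=\{(\sigma(s),s)\mid s\in\li\alpha_i,\beta_i\ri\}$, with the output listed first.
\begin{itemize}
\item If $0\le\alpha_i$, then $\mathcal{H}_+$ with $\xi\in[-1,1]$ produces the pairs $(s,s)$ for $s=\frac{\beta_i-\alpha_i}{2}\xi+\frac{\beta_i+\alpha_i}{2}\in\li\alpha_i,\beta_i\ri$. On this interval ReLU is the identity, so the pairs are exactly $(\sigma(s),s)$.
\item If $\beta_i\le 0$, then $\mathcal{H}_-$ produces the pairs $(s,0)$. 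The input sits in the first row and the output is zero.
\item If $\alpha_i<0<\beta_i$, I will invoke the known exact HZ $\mathcal{H}_\pm$ from \cite[eq.~(3)]{zhang2023backward}. It encodes the two linear pieces with one binary generator selecting the piece, and continuous constraints that fix the position along the chosen segment.
\end{itemize}

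\textbf{Ordering.} The first two cases do not list the coordinates in the same order: in $\mathcal{H}_-$ the input is the first row, while $\mathcal{H}_+$ is symmetric. I therefore need to fix one consistent ordering from the construction. With input first and output second, the Cartesian product over $i$ interleaves the coordinates as $(x_1,y_1,x_2,y_2,\dots)$.

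\textbf{Permutation.} Next I will check what $\bm P$ does. Its rows are $\bm e_2^T,\bm e_4^T,\dots$ followed by $\bm e_1^T,\bm e_3^T,\dots$, so it regroups the interleaved vector into the blocks $(\text{even entries},\text{odd entries})$. The convention has to be matched so that $\bm P\cdot\mathcal{G}_{ReLU}(\mathcal{I})=\{(\x,\sigma(\x))\mid \x\in\mathcal{I}\}$. This is pure index bookkeeping. It uses only that the Cartesian product of graphs is the graph of the componentwise map over the box $\mathcal{I}$, and ReLU is indeed componentwise.

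\textbf{Restriction to $\mathcal{Z}$.} Apply the generalized intersection with $\bm R=[\bm I\;\bm 0]$. By definition, $\mathcal{X}\cap_{\bm R}\mathcal{Z}$ keeps exactly those points $(\x,\sigma(\x))$ whose first block $\x$ lies in $\mathcal{Z}$. Because $\mathcal{Z}\subseteq\itl(\mathcal{Z})=\mathcal{I}$, no point of $\mathcal{Z}$ is lost when the graph is first built over the box. The result is therefore exactly $\mathcal{G}_\sigma(\mathcal{Z})$, and it is an HZ by Lemma~\ref{lemma:inter}.

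\textbf{Main obstacle.} The substantive step is exactness of $\mathcal{H}_\pm$ in the unstable case, because the box has to be cut down to the two segments. I would verify it directly. For binary $\xi^b=-1$, the constraints should force the output to $0$ and let the input range over $[\alpha_i,0]$. For $\xi^b=1$, they should force output equal to input on $[0,\beta_i]$. This is done by substituting the explicit matrices from \cite{zhang2023backward} and solving the single equality constraint for the continuous factors.
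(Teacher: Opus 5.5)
The paper gives no proof of this lemma; it imports it from the cited reference. Your proposal therefore has to stand on its own. Its architecture is right: exactness of each scalar graph, the Cartesian product as the graph of the componentwise map over the box, a permutation into (input, output) blocks, then the generalized intersection with $\mathcal{Z}$. In the last step you correctly use only $\mathcal{Z}\subseteq\mathcal{I}$, so any enclosing box (e.g.\ one from CROWN) preserves exactness. The gap is the permutation step. You wave it through as ``pure index bookkeeping'' without carrying it out, and it fails under the convention you adopt.

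Do the bookkeeping. For $\bm v=(a_1,b_1,\dots,a_{n_\ell},b_{n_\ell})\in\mathcal{G}_{ReLU}(\mathcal{I})$ one has $(\bm P\bm v)_i=v_{2i}=b_i$ and $(\bm P\bm v)_{n_\ell+i}=v_{2i-1}=a_i$. So $\bm P$ moves the \emph{second} coordinates of the scalar graphs into the first block, which is the block that $\cap_{[\bm I\;\bm 0]}\mathcal{Z}$ tests against $\mathcal{Z}$. $\mathcal{H}_-$ as written is input-first: its first row sweeps $\li\alpha_i,\beta_i\ri$ and its second row is $0$. Under the input-first convention you then fix, $\bm P\cdot\mathcal{G}_{ReLU}(\mathcal{I})=\{(\sigma(\x),\x)\}$, so the intersection constrains the outputs rather than the inputs.

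Your text is internally inconsistent on exactly this point. The scalar paragraph promises output-first pairs $(\sigma(s),s)$, but the $\mathcal{H}_-$ bullet yields $(s,0)$. The ordering paragraph adopts input-first, yet the permutation paragraph asserts $(\x,\sigma(\x))$. The clash you flagged between $\mathcal{H}_+$ and $\mathcal{H}_-$ is not the real one, since $\mathcal{H}_+$ is symmetric; the real clash is between $\mathcal{H}_-$ and $\bm P$.

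Read literally, the formula is false. Take $n_\ell=1$ and $\mathcal{Z}=\li -2,-1\ri$. Then $\mathcal{H}_-=\{(s,0)\mid s\in\li-2,-1\ri\}$, and $\bm P$ swaps this to $\{(0,s)\}$. The intersection then requires $0\in\li-2,-1\ri$, giving $\emptyset$ instead of the graph.

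To close the gap, fix one orientation and verify it everywhere. One option is to list the output first in every scalar graph. Then $\mathcal{H}_-$ must have generator $(0,\tfrac{\beta_i-\alpha_i}{2})^T$ and center $(0,\tfrac{\beta_i+\alpha_i}{2})^T$, $\mathcal{H}_\pm$ must also be output-first, and $\bm P\bm v=(\bm s,\sigma(\bm s))$. The other option is to keep input-first and replace $\bm P$ by $[\bm e_1\;\bm e_3\cdots\bm e_{2n_\ell-1}\;\bm e_2\;\bm e_4\cdots\bm e_{2n_\ell}]^T$. In either case, when you verify the unstable case, check that the explicit $\mathcal{H}_\pm$ you substitute follows the same orientation. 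With that correction the rest of your argument goes through.
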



From the structure of the closed-loop RNN system shown in \eqref{equ:RNN-1}, \eqref{equ:RNN-2}, and \eqref{equ:RNN2}, we can observe that each hidden-layer update depends on both the hidden state from the previous layer and the hidden state from the previous timestep. Consequently, to construct HZ-represented state-pair sets for the closed-loop RNN system over the entire domain, we propagate the state-domain set $\X$ as an HZ through each ReLU-activated layer using Lemma \ref{lem:relu}, and connect the input of each hidden layer with the hidden states from the preceding layer and the previous timestep via the linear map on the hidden-state-pair set. The detailed procedure is summarized in Algorithm \ref{alg:1}. 

In Algorithm \ref{alg:1}, for the first time step ($t=1$), since all initial hidden states are assumed to be zero, the HZ-represented hidden state sets $\mathcal{H}_1^{(\ell)}$ can be obtained by propagating $\X$ through each of the $L$ hidden layers and the output layer, similar to the case of FNNs in \cite[Algorithm~2]{zhang2024reachability}. For a general step $t \geq 2$, suppose the hidden state sets of the $(\ell-1)$-th layer, $\mathcal{H}^{(\ell-1)}_t$, have already been computed. Then, in Line~7, the hidden-state-pair set can be represented as an HZ by taking the constrained product of $\mathcal{H}^{(\ell-1)}_t$ and $\mathcal{H}^{(\ell)}_{t-1}$. Using this hidden-state-pair set, the input to the $\ell$-th hidden layer is computed through a linear mapping of HZs in Line~8. The remaining procedure is the same as the first time step. By repeating this process across all layers and all time steps, we obtain the hidden state sets for all $t \in [T]$ and $\ell \in \{0,1,\dots,L\}$. Finally, since the output of the RNN at step $t$ becomes the state of the closed-loop RNN system at step $t+1$ (see \eqref{equ:RNN2}), in Line~16 we construct the state-pair sets by computing the constrained product of the input set $\mathcal{H}_1^{(0)}$ and output set $\mathcal{H}_{t+1}^{(0)}$.

The next lemma shows that the hidden-state-pair sets obtained from the constrained product in Line 7 of Algorithm \ref{alg:1} are valid.

\begin{lemma}\label{lem:pair-set}
    Consider the closed-loop RNN system \eqref{equ:sys} and any given HZ $\X \subset \mathbb{R}^n$, let $\{\mathcal{H}^{(\ell)}_t\}_{t\in\{0,\dots,T-1\},\ell\in\{0,\dots,L\}}$ be the HZ-represented hidden state sets, computed as in Algorithm \ref{alg:1}, i.e., $\mathcal{H}^{(\ell)}_t = \{\h^{(\ell)}_{t} \;|\;  \h^{(\ell)}_{t} = \mt{\pi}_h(\x_1,t,\ell), \x_1 \in \X\}$. Then, for $t\in[T-1]$ and $\ell\in[L]$,  the constrained product of $\mathcal{H}_{t-1}^{(\ell)}$ and $\mathcal{H}_t^{(\ell-1)}$ is an exact HZ representation for the hidden-state-pair set, i.e.,
    $
        \mathcal{S}_{h}(\X,t,\ell) = \mathcal{C}(\mathcal{H}_{t-1}^{(\ell)},\mathcal{H}_t^{(\ell-1)}).
    $
\end{lemma}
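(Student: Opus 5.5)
The plan is to exploit the fact that every HZ produced by Algorithm \ref{alg:1} is generated from the single HZ $\X$. The operations used are graph construction via Lemma \ref{lem:relu}, generalized intersection via Lemma \ref{lemma:inter}, linear maps, and projections. Each of these only appends new factors (continuous/binary generators and equality-constraint rows) to the existing ones. The generators and constraints inherited from $\X$ are never modified. So every hidden state set is a projection of one common "lifted" constrained-generator set, and the constrained product simply exposes two coordinates of that lifted set at once.

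First, I would prove by induction over the order in which Algorithm \ref{alg:1} computes the sets (time-major, then layer) a structural invariant. Write $\bm{\xi}=(\bm{\xi}^c,\bm{\xi}^b)$ and let $\Xi_{t,\ell}$ be the set of factors satisfying the accumulated equality constraints $\A^c_{t,\ell}\bm{\xi}^c+\A^b_{t,\ell}\bm{\xi}^b=\bb_{t,\ell}$ of $\mathcal{H}^{(\ell)}_t$. The invariant has two parts:
\begin{enumerate}[(i)]
\item $\mathcal{H}^{(\ell)}_t=\{\G^c_{t,\ell}\bm{\xi}^c+\G^b_{t,\ell}\bm{\xi}^b+\cc_{t,\ell} \mid \bm{\xi}\in\Xi_{t,\ell}\}$.
\item The equality constraints of any later-computed set contain those of $\mathcal{H}^{(\ell)}_t$ as a leading block, padded with zeros, exactly in the form \eqref{equ:cp}.
\end{enumerate}
Part (ii) is why Lemma \ref{lemma:inter} was stated with the second operand's constraints placed on top. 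In $(\bm P\,\mathcal{G}_{ReLU}(\mathcal{I}))\cap_{[\bm I\;\bm 0]}\mathcal{Z}$, the constraints of $\mathcal{Z}$ come first, with new factors appended as columns after the old ones. The linear map in Line 8 and the output/projection steps leave the constraints untouched. Hence $\mathcal{H}^{(\ell)}_{t-1}$ and $\mathcal{H}^{(\ell-1)}_t$ are well-defined for the constrained product, with $\mathcal{H}^{(\ell-1)}_t$ in the role of $\mathcal{Y}$ in Definition \ref{def:cp}, since it is computed later. Up to ordering of coordinates, this is the pair in the lemma.

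Second, I would prove by the same induction a semantic invariant. Every $\bm{\xi}\in\Xi_{t,\ell}$ determines a unique $\x_1\in\X$, namely the map applied to the leading factors from $\X$. Moreover, the affine image of $\bm{\xi}$ under the generators of any earlier set $\mathcal{H}^{(r)}_k$, using the leading subvector of $\bm{\xi}$, equals $\mt{\pi}_h(\x_1,k,r)$. Conversely, every $\x_1\in\X$ is attained this way. The base case is $\X$ itself. The inductive step uses the exactness of Lemma \ref{lem:relu}: for a fixed pre-activation value, the appended ReLU factors can be chosen, and every admissible choice outputs the ReLU value, because the graph is exact. The input to layer $\ell$ at step $t$ is the affine combination $\bm W_h^{(\ell)}\h^{(\ell)}_{t-1}+\bm W_x^{(\ell)}\h^{(\ell-1)}_t+\bm v_h^{(\ell)}$, evaluated on the constrained product. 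By the invariant, this is exactly the correct pre-activation for the same $\x_1$.

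Finally, the lemma follows. Take a point of $\mathcal{C}(\mathcal{H}_{t-1}^{(\ell)},\mathcal{H}_t^{(\ell-1)})$, which is given by some $\bm{\xi}\in\Xi_{t,\ell-1}$. Its first block uses the leading subvector of $\bm{\xi}$, which lies in $\Xi_{t-1,\ell}$, and the second block uses all of $\bm{\xi}$. By the semantic invariant these equal $\mt{\pi}_h(\x_1,t-1,\ell)$ and $\mt{\pi}_h(\x_1,t,\ell-1)$ for a common $\x_1\in\X$, giving "$\subseteq$". For "$\supseteq$", pick $\bm{\xi}$ realizing a given $\x_1$ and apply the same identity.

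The main obstacle is the bookkeeping in the first step: checking that every operation in Algorithm \ref{alg:1} truly appends factors and constraints rather than reordering them, including the time-$1$ base case and the passage from $\mathcal{H}^{(L)}$ to $\mathcal{H}^{(0)}_{t+1}$. I would handle this by recording, for each set, its factor index set as a prefix of the next set's factor index set.
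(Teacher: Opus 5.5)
Your proposal is correct and follows the same route as the paper. The leading-block structure of the equality constraints, which comes from the operand ordering in Lemma~\ref{lemma:inter}, makes the constrained product well defined, and the shared factors force both components to be generated by the same $\x_1$. Your factor-prefix and semantic invariants make rigorous what the paper asserts in one line, including two points it glosses over: set equality of the $\mathcal{H}^{(\ell)}_t$ alone does not determine the pairing, and exactness of the hidden state sets and of the pair sets has to be established by a single joint induction.
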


\begin{algorithm}[!t]
\SetNoFillComment
\caption{Exact state-pair set computation via HZs}\label{alg:1}
\KwIn{HZ state domain set $\X$, weight matrices $\{\W_{h}^{(\ell)}\}_{\ell=1}^L$, $\{\W_{x}^{(\ell)}\}_{\ell=1}^L$ and $\W_{y}$, bias vectors $\{\v_{h}^{(\ell)}\}_{\ell=1}^L$ and $\v_y$, number of steps $T>1$}
\KwOut{a sequence of state-pair sets $\{\mathcal{S}_{x,t}\}^T_{t=2}$}
$\mathcal{H}_1^{(0)} \leftarrow \X$;\\
\For{$t \in \{1,\dots,T-1$\}}{
\For{$\ell \in \{1,\dots,L$\}}{
\uIf{$t = 1$}{$\mathcal{Z}^{(\ell)}_{h,1} \leftarrow \bm{W}_{x}^{(\ell)}\mathcal{H}_1^{(\ell-1)}+\bm{v}_{h}^{(\ell)}$;\\}
\uElse{
$\mathcal{S}_{h,t}^{(\ell)}\leftarrow\mathcal{C}(\mathcal{H}_{t-1}^{(\ell)},\mathcal{H}_t^{(\ell-1)})$\tcp*{Hidden-state-pair set}
$\mathcal{Z}^{(\ell)}_{h,t} \leftarrow [\bm{W}_{h}^{(\ell)}\; \bm{W}_{x}^{(\ell)}] \cdot \mathcal{S}_{h,t}^{(\ell)}+\bm{v}_{h}^{(\ell)}$;\\}
$\mathcal{I}_{h,t}^{(\ell)} = \itl(\mathcal{Z}^{(\ell)}_{h,t})$\tcp*{Using CROWN}
$\mathcal{G}_{h,t}^{(\ell)}$ $\leftarrow$ $(\bm{P}\cdot\mathcal{G}_{ReLU}(\mathcal{I}_{h,t}^{(\ell)}))\cap_{[\bm{I}\;\bm{0}]} \mathcal{Z}^{(\ell)}_{h,t}$\tcp*{Hidden layer ReLU graph}
$\mathcal{H}_{t}^{(\ell)}$ $\leftarrow$ $[\bm{0}\;\bm{I}] \cdot \mathcal{G}_{h,t}^{(\ell)}$\tcp*{Hidden state set}
}
$\mathcal{Z}_{y,t} \leftarrow \bm{W}_{y}\mathcal{H}_{t}^{(L)} +\bm{v}_{y}$;\\
$\mathcal{I}_{y,t} = \itl(\mathcal{Z}_{y,t})$\tcp*{Using CROWN}
$\mathcal{G}_{y,t}$ $\leftarrow$ $(\bm{P}\cdot\mathcal{G}_{ReLU}(\mathcal{I}_{y,t}))\cap_{[\bm{I}\;\bm{0}]} \mathcal{Z}_{y,t}$\tcp*{Output layer ReLU graph}
$\mathcal{H}_{t+1}^{(0)}$ $\leftarrow$ $[\bm{0}\;\bm{I}] \cdot \mathcal{G}_{y,t}$\tcp*{Output set}
$\mathcal{S}_{x,t+1}$ $\leftarrow$ $\mathcal{C}(\mathcal{H}_1^{(0)},\mathcal{H}_{t+1}^{(0)})$\tcp*{State-pair set}
}
\Return{$\{\mathcal{S}_{x,t}\}^T_{t=2}$}
\end{algorithm}

\begin{proof}
    By the properties of linear mappings and generalized intersections of HZs (Lemma~\ref{lemma:inter}), the equality constraints encoded in $\mathcal{H}_{t-1}^{(\ell)}$ are contained in those of $\mathcal{H}_{t}^{(\ell-1)}$. Hence, the constrained product $\mathcal{C}(\mathcal{H}_{t-1}^{(\ell)},\mathcal{H}_{t}^{(\ell-1)})$ is well defined. By relating the common equality constraints, we enforce that the pairs of hidden states in the constrained product are generated by the same initial state, i.e., $\mathcal{C}(\mathcal{H}_{t-1}^{(\ell)},\mathcal{H}_t^{(\ell-1)}) = \{(\h^{(\ell)}_{t-1},\h^{(\ell-1)}_{t}) \;|\;  \h^{(\ell)}_{t-1} = \mt{\pi}_h(\x_1,t-1,\ell), \h^{(\ell-1)}_{t} = \mt{\pi}_h(\x_1',t,\ell-1), \x_1 \in \X, \x_1'\in \X, \x_1 = \x_1'\} = \mathcal{S}_{h}(\X,t,\ell)$, which completes the proof.
\end{proof}

\begin{remark}
Lines 9 and 13 of Algorithm \ref{alg:1} involve computing the interval hulls of the HZ sets $\mathcal{Z}^{(\ell)}_{h,t}$ and $\mathcal{Z}_{y,t}$. The exact interval hull can be obtained by solving $2n_\ell$ MILPs -- one minimization and one maximization per coordinate -- to obtain tight lower and upper bounds (see \cite[Proposition~3.2.10]{bird2022hybrid}). However, this approach quickly becomes intractable as $n_\ell$ increases. To address this, in our implementation we approximate these per-layer interval bounds using CROWN  with GPU acceleration \cite{zhang2018efficient,wang2021betaetal}. Notably, CROWN provides sound interval enclosures that substantially enhance scalability while preserving exactness in the subsequent state-pair set construction.
\end{remark}

The following proposition shows that the computation of state-pair sets in Algorithm \ref{alg:1} is sound.

\begin{proposition}\label{prop1:state-pair}
Consider the closed-loop RNN system \eqref{equ:sys} and any given HZ $\X \subset \mathbb{R}^n$, the output of Algorithm \ref{alg:1}, $\{\mathcal{S}_{x,t}\}^T_{t=2}$, is a sequence of HZs that can exactly represent the state-pair sets of \eqref{equ:sys} over the domain $\X$.
\end{proposition}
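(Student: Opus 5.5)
The argument is an induction over the double loop of Algorithm~\ref{alg:1}, ordered lexicographically by $(t,\ell)$. The invariant is that every computed set is an \emph{exact} image of $\X$ that carries the full equality-constraint system of $\X$ as its leading block. Precisely, for each processed pair $(t,\ell)$ we claim:
\begin{enumerate}[(i)]
\item $\mathcal{H}^{(\ell)}_t = \{\mt{\pi}_h(\x_1,t,\ell)\mid \x_1\in\X\}$;
\item the generators and constraints of $\mathcal{H}^{(\ell)}_t$ extend those of every previously computed set, in the nested sense required by \eqref{equ:cp}. That is, the constraint matrices of an earlier set appear as the top-left block, padded with zero columns for newly introduced factors.
\end{enumerate}
The base case is $\mathcal{H}^{(0)}_1=\X$.

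\textbf{Induction step.} Fix $(t,\ell)$.
\begin{itemize}
\item For $t=1$, Line~5 is an exact affine map. The affine map of an HZ leaves the constraints unchanged and only transforms the generators and center.
\item For $t\ge2$, Lemma~\ref{lem:pair-set} gives $\mathcal{S}^{(\ell)}_{h,t}=\mathcal{S}_h(\X,t,\ell)$ exactly. The affine map in Line~8 then yields exactly $\{\bm W_h^{(\ell)}\h^{(\ell)}_{t-1}+\bm W_x^{(\ell)}\h^{(\ell-1)}_t+\v_h^{(\ell)}\}$, parametrized by the same $\x_1$.
\item Lemma~\ref{lem:relu} represents the ReLU graph over $\mathcal{Z}^{(\ell)}_{h,t}$ exactly, for \emph{any} interval enclosing $\mathcal{Z}^{(\ell)}_{h,t}$. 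Hence the CROWN bounds used in Line~9 do not affect exactness, only soundness, since they are valid enclosures.
\item The projection in Line~11 recovers $\mathcal{H}^{(\ell)}_t$. This proves (i).
\end{itemize}
For (ii), the constraint block of $\mathcal{G}^{(\ell)}_{h,t}$ is the one produced by Lemma~\ref{lemma:inter}, applied with $\mathcal{Z}^{(\ell)}_{h,t}$ as the second operand. This operand's constraints are therefore placed at the top, followed by the new ReLU constraints and the coupling rows, with new factors appended as extra columns. This ordering is exactly why the paper adopts the modified identity \eqref{eqhzinter}. Projection preserves constraints, so nesting propagates. The output-layer steps in Lines~12--15 are handled identically, giving $\mathcal{H}^{(0)}_{t+1}=\{\x_{t+1}\mid\x_1\in\X\}$ with nested constraints.

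\textbf{Conclusion.} By (ii), $\mathcal{H}^{(0)}_1=\X$ and $\mathcal{H}^{(0)}_{t+1}$ satisfy \eqref{equ:cp}, so the constrained product in Line~16 is well defined. Since the factors of $\X$ are shared, a point of $\mathcal{C}(\mathcal{H}^{(0)}_1,\mathcal{H}^{(0)}_{t+1})$ is a pair $(\x_1,\x_{t+1})$ with $\x_{t+1}$ generated from the same $\x_1$. This set therefore equals $\mathcal{S}_x(\X,t+1)$, by the same argument as in the proof of Lemma~\ref{lem:pair-set}.

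\textbf{Main obstacle.} The delicate step is (ii) together with the factor-sharing claim. We must show that the continuous and binary factors of $\X$ occupy the same leading coordinates in every set. Their values must also uniquely determine $\x_1$ and hence all downstream states, whenever the added constraints are satisfied. The ReLU constraints introduce auxiliary factors, but by exactness of Lemma~\ref{lem:relu} these are forced by the preceding layer's value. I would therefore make the invariant stronger: every feasible full factor vector of $\mathcal{H}^{(\ell)}_t$ restricts to a feasible factor vector of each earlier set, and it maps to the hidden states of one common $\x_1$. This is checked by unwinding the block structure of \eqref{eqhzinter} at each step.
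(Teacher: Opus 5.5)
Your proof is correct and follows the paper's own argument, made explicit. The paper also notes that the constraints of $\X$ sit inside those of $\mathcal{H}^{(0)}_{t+1}$, so $\mathcal{C}(\X,\mathcal{H}^{(0)}_{t+1})$ is well defined and pairs each $\x_{t+1}$ with the same $\x_1$. It asserts this nesting and the exactness of $\mathcal{H}^{(0)}_{t+1}$ ``by construction,'' whereas your lexicographic induction and strengthened factor-sharing invariant actually justify them. You also correctly land on $\mathcal{S}_x(\X,t+1)$, where the paper's last line has the typo $\mathcal{S}_x(\X,t)$.

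When you unwind the block structure, also state the converse extension property: every feasible factor vector of an earlier set extends to a feasible one of the later set, by exactness of the ReLU graph. Your invariant as written covers only restriction, which gives $\subseteq$; the extension property is what gives $\mathcal{C}(\X,\mathcal{H}^{(0)}_{t+1})\supseteq\mathcal{S}_x(\X,t+1)$.
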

\begin{proof}
    By construction in Algorithm \ref{alg:1}, we know the equality constraints in the domain set $\X$ are included in the state set $\mathcal{H}^{(0)}_{t+1} = \{\x_{t+1}\;|\; \x_{k+1} = \mt{\pi}(\x_k),k\in[t],\x_1\in\X\}$. Therefore, $\mathcal{H}_1^{(0)}$ and $\mathcal{H}^{(0)}_{t+1}$ are well-defined for constrained product since $\mathcal{H}_1^{(0)}= \X$. For $t= 1,\dots,T-1$, the output of Algorithm \ref{alg:1}, $\mathcal{S}_{x,t+1}$, stacks the initial state and the $t+1$ step state as $ \mathcal{S}_{x,t+1}= \mathcal{C}(\mathcal{H}_1^{(0)},\mathcal{H}_{t+1}^{(0)}) = \mathcal{C}(\X,\mathcal{H}_{t+1}^{(0)}) = \{(\x,\x_{t+1})\;|\;\x\in\X,\x_{k+1} = \mt{\pi}(\x_k),k\in[t],\x_1\in\X,\x = \x_1)\} = \mathcal{S}_x(\X,t)$, which is an exact HZ representation of the state-pair set of \eqref{equ:sys} over the domain $\X$.
\end{proof}

\begin{remark}
Denote $n_{g,x}$, $n_{b,x}$, and $n_{c,x}$ as the numbers of continuous generators, binary generators, and equality constraints of the HZ $\mathcal{X}$, respectively. Let $N_t$ be the total number of \emph{unstable} ReLU activations in the RNN at step $t$ (i.e., those whose input interval $[\alpha,\beta]$ satisfies $\alpha<0<\beta$). Then, the set complexity of $\mathcal{S}_{x,t}$ from Algorithm \ref{alg:1} is: 
\begin{align}
    n_{g,s}  \!=\! n_{g,x} \!+ \!4 N_t, n_{b,s} \!=\! n_{b,x}\! +\! N_t,n_{c,s} \!=\! n_{c,x} \!+\! 3 N_t.\label{equ:nsindex}
\end{align}
\end{remark}

\subsection{Exact Forward and Backward Reachability}

In this subsection, we compute the exact FRSs and BRSs for the closed-loop RNN system \eqref{equ:sys}.

\begin{theorem}\label{thm1}
Consider the closed-loop RNN system \eqref{equ:sys} and any given HZ $\mathcal{X} \subset\mathbb{R}^n$. Let $\{\mathcal{S}_{x,t}\}^T_{t=2}$ be the state-pair sets over the domain $\mathcal{X}$ computed using Algorithm \ref{alg:1}. 

(i) For any initial set $\mathcal{X}_1 \subseteq \mathcal{X} $ represented by an HZ, the $t$-step FRS of \eqref{equ:sys}, where $t= 2,\dots,T$, can be represented by the following HZ: 
$$
{\mathcal{R}}_t(\mathcal{X}_1) = [\bm 0_{n\times n} \; \bm I_n]\cdot (\mathcal{S}_{x,t}\cap_{[\bm I_n \; \bm 0_{n\times n}]} \X_1).
$$

(ii) For any target set $\mathcal{T}$ represented by an HZ, the $t$-step BRS of \eqref{equ:sys} in the domain $\mathcal{X}$, where $t= 1,\dots,T$, can be represented by the following HZ: 
$$
{\mathcal{P}}_t(\mathcal{T}) = [\bm I_n \; \bm 0_{n\times n}]\cdot(\mathcal{S}_{x,t} \cap_{[\bm 0_{n\times n} \; \bm I_n]}\mathcal{T}).
$$
\end{theorem}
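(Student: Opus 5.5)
The plan is to reduce both claims to set identities about the state-pair set. I would then invoke closure of HZs under generalized intersection (Lemma~\ref{lemma:inter}) and under linear maps. By Proposition~\ref{prop1:state-pair}, each $\mathcal{S}_{x,t}$ returned by Algorithm~\ref{alg:1} is an HZ with
$$\mathcal{S}_{x,t}=\mathcal{S}_x(\X,t)=\{(\x_1,\x_t)\mid \x_1\in\X,\ \x_{k+1}=\mt{\pi}(\x_k),\ k\in[t-1]\}.$$
The HZ-representability of the two right-hand sides is then immediate. $\mathcal{S}_{x,t}\cap_{\bm R}\mathcal{Y}$ is an HZ by \eqref{eqhzinter}, and multiplying an HZ by a matrix $\bm M$ gives the HZ $\langle \bm M\G^c,\bm M\G^b,\bm M\cc,\A^c,\A^b,\bb\rangle$. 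What remains is to verify the two set equalities.

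For (i), I would unfold the definitions with $\bm R=[\bm I_n\;\bm 0_{n\times n}]$. This gives
$$\mathcal{S}_{x,t}\cap_{\bm R}\X_1=\{(\x_1,\x_t)\in\mathcal{S}_x(\X,t)\mid \x_1\in\X_1\}.$$
Since $\X_1\subseteq\X$, the condition $\x_1\in\X$ in the state-pair set becomes redundant, so this set equals $\{(\x_1,\x_t)\mid \x_1\in\X_1,\ \x_{k+1}=\mt{\pi}(\x_k),\ k\in[t-1]\}$. Projecting onto the second block with $[\bm 0\;\bm I_n]$ yields exactly $\mathcal{R}_t(\X_1)$.

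Both inclusions should be checked explicitly. The key fact is that $\mt{\pi}$ is a deterministic map, so each $\x_1$ determines a unique $\x_t$. For ``$\supseteq$'': any $\x_t\in\mathcal{R}_t(\X_1)$ comes from some $\x_1\in\X_1\subseteq\X$, so $(\x_1,\x_t)$ lies in the intersected set. For ``$\subseteq$'': any element of the projection is the second component of such a pair, hence reachable from $\X_1$.

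For (ii), I would proceed symmetrically with $\bm R=[\bm 0\;\bm I_n]$. The intersection is $\{(\x_1,\x_t)\mid \x_1\in\X,\ \x_t\in\mathcal{T},\ \text{dynamics}\}$, and projecting onto the first block gives $\{\x_1\in\X\mid \x_t\in\mathcal{T}\}=\mathcal{P}_t(\mathcal{T})$. Two points need care here:
\begin{itemize}
\item The restriction $\x_1\in\X$ is inherited from $\mathcal{S}_{x,t}$. This is precisely why the BRS is computed ``in the domain $\X$'', and it matches the definition of $\mathcal{P}_t$.
\item The case $t=1$ must be handled separately, because Algorithm~\ref{alg:1} only returns $t\ge 2$. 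For $t=1$ I would set $\mathcal{S}_{x,1}$ to be the diagonal set $\{(\x,\x)\mid \x\in\X\}=[\bm I_n;\bm I_n]\X$, which is an HZ. Then $\mathcal{P}_1(\mathcal{T})=\X\cap\mathcal{T}$, which is consistent with the formula.
\end{itemize}

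The main obstacle is making the projection step rigorous at the level of HZ representations rather than only at the level of sets. The representation of the intersection from Lemma~\ref{lemma:inter} must encode the same set, and its factor variables must remain consistent with a single shared initial state. This is guaranteed by the exactness established in Proposition~\ref{prop1:state-pair}, together with the fact that generalized intersection and linear maps are exact identities for HZs. Once that is cited, the remaining steps are routine set manipulations.
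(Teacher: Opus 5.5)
Your proposal is correct and follows essentially the same route as the paper: identify $\mathcal{S}_{x,t}$ with $\mathcal{S}_x(\X,t)$ via Proposition~\ref{prop1:state-pair}, then project. For (i), intersecting on the first block with $\X_1\subseteq\X$ yields $\mathcal{S}_x(\X_1,t)$; for (ii), intersecting on the second block with $\mathcal{T}$ keeps exactly the pairs with $\x_t\in\mathcal{T}$. Your separate handling of $t=1$ in (ii) via the diagonal set $\{(\x,\x)\mid \x\in\X\}$ covers a case the paper's proof skips, while the determinism remark and the representation-level ``obstacle'' are unnecessary, since both inclusions and the HZ operations are exact at the level of sets.
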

\begin{proof}
    (i) Since $\mathcal{S}_{x,t} =  \mathcal{S}_x(\X,t) = \{(\x_1,\x_{t})\;|\;\x_{k+1} = \mt{\pi}(\x_k),k\in[t-1],\x_1\in\X)\}$, we have $ \mathcal{S}_x(\X_1,t) = \mathcal{S}_{x,t}\cap_{[\bm I_n \; \bm 0_{n\times n}]} \X_1$. Therefore, $\mathcal{R}_t(\X_1) = \{\x_t \;|\;\x_1\in \X_1, \x_{k+1} = \mt{\pi}(\x_k), k \in [t-1]\} = [\bm 0_{n\times n} \; \bm I_n]\cdot \mathcal{S}_x(\X_1,t) = [\bm 0_{n\times n} \; \bm I_n]\cdot (\mathcal{S}_{x,t}\cap_{[\bm I_n \; \bm 0_{n\times n}]} \X_1)$.
    
    (ii) Since $\mathcal{S}_{x,t} =  \mathcal{S}_x(\X,t) = \{(\x_1,\x_{t})\;|\;\x_{k+1} = \mt{\pi}(\x_k),k\in[t-1],\x_1\in\X)\}$, we have $\mathcal{P}_t(\mathcal{T}) = \{\x_1 \in \mathcal{X}\;|\;\x_t\in \mathcal{T}, \x_{k+1} = \mt{\pi}(\x_k), k \in [t-1]\} = [\bm I_n \; \bm 0_{n\times n}]\cdot(\mathcal{S}_{x,t} \cap_{[\bm 0_{n\times n} \; \bm I_n]}\mathcal{T})$.
\end{proof}

Prior work on the  verification of \emph{isolated} RNNs has primarily focused on forward reachability using convex set abstractions or transformations to FNNs. For example, unrolling-based methods proposed in \cite{akintunde2019verification}, expand an RNN into a large equivalent FNN over a fixed horizon, while invariant inference methods in \cite{jacoby2020verifying} construct FNN over-approximations, allowing well-established FNN verification techniques to be applied. To avoid unrolling, set-propagation methods have also been studied by using different set representations including star-set, sparse star sets.  
However, these efforts address only forward reachability and do not consider backward reachability or the feedback coupling present in closed-loop settings. For RNNs embedded in control loops, recent literature has instead emphasized stability guarantees using LMI-based analyses (e.g., \cite{revay2021convex,d2023incremental,la2025regional}). To the best of our knowledge, Theorem~\ref{thm1} provides the first result on closed-loop RNN reachability that uifies the computation of both forward and backward reachable sets.

\begin{remark}
Assume the initial set $\mathcal{X}_1$ (resp., target set $\mathcal{T}$) has $(n_{g,1}, n_{b,1}, n_{c,1})$ (resp., $(n_{g,\tau}, n_{b,\tau}, n_{c,\tau})$) continuous generators, binary generators, and equality constraints, respectively. Then, the set complexity of the FRS $\mathcal{R}_t(\mathcal{X}_1)$ is given by 
$n_{g,r} = n_{g,s}+n_{g,1}$, $n_{b,r} = n_{b,s}+n_{b,1}$, $n_{c,r} = n_{c,s}+n+n_{c,1}$,
and that of the BRS $\mathcal{P}_t(\mathcal{T})$ is given by 
$n_{g,p} = n_{g,s}+n_{g,\tau}$, $n_{b,p} = n_{b,s}+n_{b,\tau}$, $n_{c,p} = n_{c,s}+n + n_{c,\tau}$,
where $n_{g,s}$, $n_{b,s}$ and $n_{c,s}$ are given in \eqref{equ:nsindex}.
\end{remark}

\subsection{Over-approximated Forward and Backward Reachability}\label{sec:NN}

From the exact reachability analysis in the previous subsection, the complexity of the HZ-represented reachable sets grows proportionally with the total number of unstable ReLU activations $N_t$. The quantity $N_t$ itself depends on the size of the initial set, the size of the RNN, and the number of time steps $t$ in the closed-loop evolution \eqref{equ:sys}. To improve the scalability of the proposed HZ-based approach, we now present an over-approximation procedure that enforces a desired complexity limit.

Based on the properties of HZ operations involved in Algorithm \ref{alg:1}, it can be observed that the only procedures that increase the set complexity of the HZ state-pair sets are the steps (i.e., Line 10 and Line 14) that involve the generalized intersection as defined in \eqref{equ:g_phi} for the computation of the vector-valued graph set of ReLU activations. Thus, reducing the set complexity amounts to replacing the exact ReLU graph by a relaxed representation. Specifically, for each unstable ReLU, we over-approximate its exact graph $\mathcal{H}_\pm$ by the standard triangle (convex-hull) relaxation $\mathcal{H}_\triangle$, as shown in Figure \ref{fig:ReLU}, i.e, $\mathcal{H}_\pm \subset \mathcal{H}_\triangle = \langle [\G^c_\pm\; \G^b_\pm],\emptyset,\cc_\pm,[\A^c_\pm\; \A^b_\pm],\emptyset,\bb_\pm\rangle$. Here $\mathcal{H}_\triangle$ is an HZ with no binary generators (a degenerate HZ), and hence reduces to a constrained zonotope~\cite{scott2016constrained}. This substitution prevents the introduction of additional binary generators at the ReLU graph stage and thereby controls the growth of set complexity.

\begin{figure}[!t]
    \centering
    \subfloat[]{\includegraphics[width=0.43\linewidth]{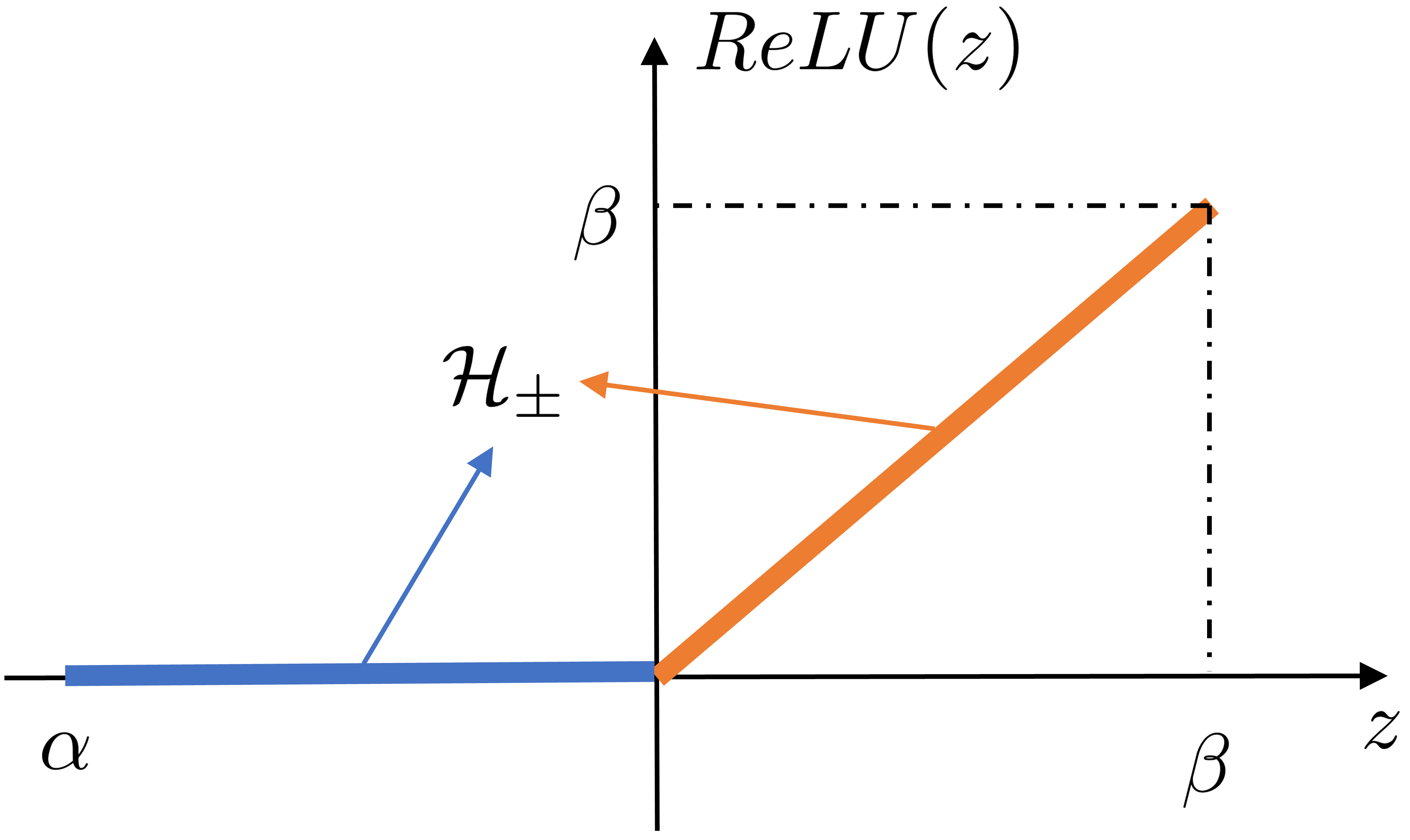}}
    \hspace{1em}
    \subfloat[]{\includegraphics[width=0.43\linewidth]{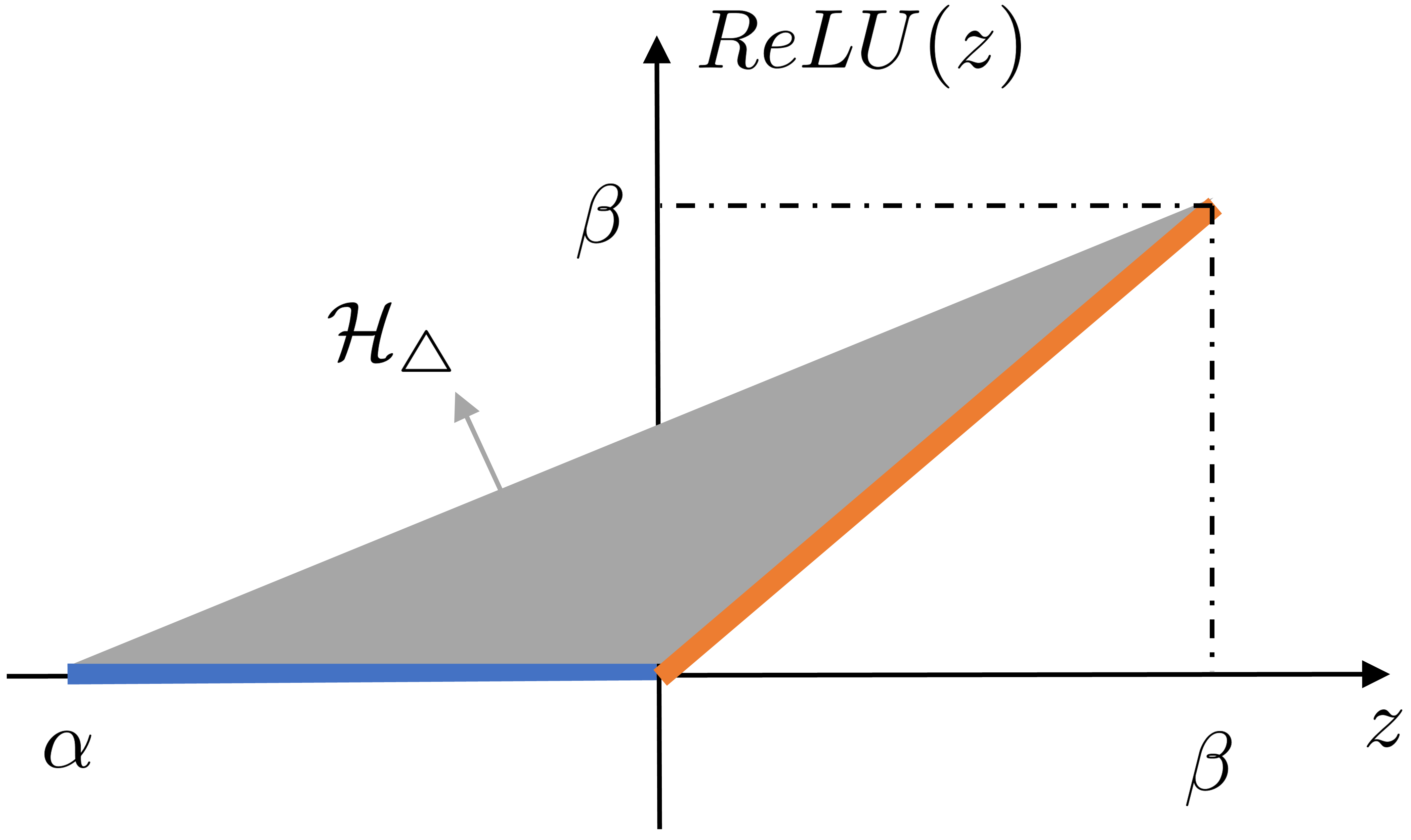}}
    \caption{The graph of a ReLU function over $\li\alpha,\beta\ri$ can be represented as an HZ. (a) Graph $\mathcal{H}_{\pm}$ with $ \alpha < 0 < \beta$. (b) The triangle over-approximation of the graph $\mathcal{H}_{\triangle} \supset \mathcal{H}_\pm$. The area of $\mathcal{H}_{\triangle}$ is $-\frac{\alpha\cdot\beta}{2}$. }
    \label{fig:ReLU}
\end{figure}

Although we can apply this triangle relaxation to all the unstable ReLU activations, this tends to make the subsequent state-pair set and reachable set computation overly conservative, due to the convex-relaxation barrier  \cite{salman2019convex}. 
To mitigate conservatism while controlling complexity, we selectively apply the triangle relaxation only to those unstable ReLUs that are expected to have limited impact on the tightness of the reachable sets. Given precomputed input bounds $\li\alpha,\beta\ri$ for each unstable ReLU (with $\alpha<0<\beta$), we assign a heuristic “triangle-area” score
\begin{equation}\label{equ:score}
score = -\frac{\alpha\cdot\beta}{2},    
\end{equation}
which is the area of the triangular gap of the relaxation. We then rank the unstable ReLUs by $score$ in descending order and introduce a binary limit parameter $N_b\in\mathbb{Z}_{\ge 0}$. The top $N_b$ unstable ReLUs with the highest scores are labeled $0$ and represented with the exact ReLU graph, while the remaining unstable ReLUs are labeled $1$ and approximated by their triangle relaxations, i.e.,
\begin{align*}
        \tilde{\mathcal{G}}_{ReLU}(\li \alpha,\beta \ri,label) \!=\! \left\{ \!\!\!\begin{array}{l}
\mathcal{H}_\triangle, \;\mbox{if}\; (\alpha \!<\!0\!<\!\beta) \wedge (label = 1), \\ 
\mathcal{G}_{ReLU}(\li \alpha,\beta \ri), \;\;\;\;\;\mbox{otherwise}. \\
\end{array}\right. \!\!\!\!\!
\end{align*}

\begin{algorithm}[!t]
\SetNoFillComment
\caption{Over-approximated state-pair set computation via HZs}\label{alg:2}
\KwIn{HZ state domain set $\X$, RNN $\mt{\pi}$ with weight matrices $\{\W_{h}^{(\ell)}\}_{\ell=1}^L$, $\{\W_{x}^{(\ell)}\}_{\ell=1}^L$ and $\W_{y}$, bias vectors $\{\v_{h}^{(\ell)}\}_{\ell=1}^L$ and $\v_y$, number of steps $T\in\mathbb{Z}_{>1}$, binary limit parameter $N_b\in\mathbb{Z}_{\geq 0}$}
\KwOut{over-approximated state-pair sets $\{\tilde{\mathcal{S}}_{x,t}\}^T_{t=2}$}
$\{\mathcal{I}_{h,t}^{(\ell)},\mathcal{I}_{y,t}\}_{t\in[T-1],\ell\in[L]} \leftarrow CROWN(\mt{\pi},\X,T)$\tcp*{Precomputation of interval bounds}
$\{{label}_{h,t}^{(\ell)},{label}_{y,t}\}_{t\in[T-1],\ell\in[L]}\!\leftarrow\! rank(\{\mathcal{I}_{h,t}^{(\ell)},\mathcal{I}_{y,t}\},N_b)$\tcp*{Rank and label ReLU activations}
$\tilde{\mathcal{H}}_1^{(0)} \leftarrow \X$;\\
\For{$t \in \{1,\dots,T-1$\}}{
\For{$\ell \in \{1,\dots,L$\}}{
\uIf{$t = 1$}{$\tilde{\mathcal{Z}}^{(\ell)}_{h,1} \leftarrow \bm{W}_{x}^{(\ell)}\tilde{\mathcal{H}}_1^{(\ell-1)}+\bm{v}_{h}^{(\ell)}$;\\}
\uElse{
$\tilde{\mathcal{S}}_{h,t}^{(\ell)}\leftarrow\mathcal{C}(\tilde{\mathcal{H}}_{t-1}^{(\ell)},\tilde{\mathcal{H}}_t^{(\ell-1)})$\tcp*{Hidden-state-pair set}
$\tilde{\mathcal{Z}}^{(\ell)}_{h,t} \leftarrow [\bm{W}_{h}^{(\ell)}\; \bm{W}_{x}^{(\ell)}] \cdot \tilde{\mathcal{S}}_{h,t}^{(\ell)}+\bm{v}_{h}^{(\ell)}$;\\}
$\tilde{\mathcal{G}}_{h,t}^{(\ell)}$ $\leftarrow$ $(\bm{P}\cdot\tilde{\mathcal{G}}_{ReLU}(\mathcal{I}_{h,t}^{(\ell)},{label}_{h,t}^{(\ell)}))\cap_{[\bm{I}\;\bm{0}]} \tilde{\mathcal{Z}}^{(\ell)}_{h,t}$\tcp*{Hidden layer ReLU graph}
$\tilde{\mathcal{H}}_{t}^{(\ell)}$ $\leftarrow$ $[\bm{0}\;\bm{I}] \cdot \tilde{\mathcal{G}}_{h,t}^{(\ell)}$\tcp*{Hidden state set}
}
$\tilde{\mathcal{Z}}_{y,t} \leftarrow \bm{W}_{y}\tilde{\mathcal{H}}_{t}^{(L)} +\bm{v}_{y}$;\\
$\tilde{\mathcal{G}}_{y,t}$ $\leftarrow$ $(\bm{P}\cdot\tilde{\mathcal{G}}_{ReLU}(\mathcal{I}_{y,t},{label}_{y,t}))\cap_{[\bm{I}\;\bm{0}]} \tilde{\mathcal{Z}}_{y,t}$\tcp*{Output layer ReLU graph}
$\tilde{\mathcal{H}}_{t+1}^{(0)}$ $\leftarrow$ $[\bm{0}\;\bm{I}] \cdot \tilde{\mathcal{G}}_{y,t}$\tcp*{Output set}
$\tilde{\mathcal{S}}_{x,t+1}$ $\leftarrow$ $\mathcal{C}(\tilde{\mathcal{H}}_1^{(0)},\tilde{\mathcal{H}}_{t+1}^{(0)})$\tcp*{State-pair set}
}
\Return{$\{\tilde{\mathcal{S}}_{x,t}\}^T_{t=2}$}
\end{algorithm}

Algorithm~\ref{alg:2} summarizes the procedure for computing over-approximated state–pair sets using the triangle relaxation approach. In Line~1, we first approximate the interval bound for each ReLU unit. This can be done by simply propagating an interval hull of the domain $\X$ across all layers and time steps. Alternatively, we apply CROWN to get a tighter approximation by feeding the parameters of the RNN $\mt{\pi}$, the domain set $\X$, and the horizon $T$. While tighter bounds may improve the scoring accuracy and thus reduce the conservatism of the approximated state-pair sets, they do not affect the soundness of the method. Using these precomputed interval bounds, we rank all unstable ReLU units across all layers and all time steps by the score in \eqref{equ:score} and label the top $N_b$ units with $0$; all remaining unstable units are labeled $1$. The subsequent steps follow Algorithm~\ref{alg:1}.

\begin{proposition}\label{prop2:over-state-pair}
Consider the closed-loop RNN system \eqref{equ:sys} and any given HZ $\X \subset \mathbb{R}^n$, let $\{\tilde{\mathcal{S}}_{x,t}^{N_b}\}^T_{t=2}$ be the output of Algorithm \ref{alg:2} with binary limit parameter $N_b$, then for every $t\in\{2,\dots,T\}$, the following hold true:\\
(i) The $t$-th step state-pair set over $\X$ is over-approximated by $\tilde{\mathcal{S}}_{x,t}^{N_b}$, i.e., $\tilde{\mathcal{S}}_{x,t}^{N_b}\supseteq \mathcal{S}_x(\X,t)$;\\
(ii) If $N_b$ is no less than $N_t$ (i.e., the total number of unstable ReLU activations at step $t$), then $\tilde{\mathcal{S}}_{x,t}^{N_b}= \mathcal{S}_x(\X,t)$;\\
(iii) The size of $\tilde{\mathcal{S}}_{x,t}^{N_b}$ decreases monotonically with $N_b$, i.e., $\tilde{\mathcal{S}}_{x,t}^{N_b} \supseteq \tilde{\mathcal{S}}_{x,t}^{N_b'}$ if $0 \leq N_b \leq N_b'$.
\end{proposition}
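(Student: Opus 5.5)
The natural approach is to compare Algorithm \ref{alg:2} with Algorithm \ref{alg:1} line by line. Both algorithms use the same interval bounds $\mathcal{I}^{(\ell)}_{h,t},\mathcal{I}_{y,t}$; in Algorithm \ref{alg:1} we take these to be the precomputed sound enclosures as well. They differ only in the ReLU graph factor: $\tilde{\mathcal{G}}_{ReLU}$ versus $\mathcal{G}_{ReLU}$. The core tool is a monotonicity property of every HZ operation in the pipeline, written in terms of the underlying generator/constraint data rather than just the point sets. The reason is that the constrained product $\mathcal{C}(\cdot,\cdot)$ is not a set-level operation. It couples the two operands through their shared factors $\bm\xi^c,\bm\xi^b$ and shared constraints, so set inclusion alone does not propagate through it.

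The object to track is the lifted set. For each computed HZ $\tilde{\mathcal{H}}^{(\ell)}_t$, consider the set of pairs (factor vector, point) generated by it. Replacing $\mathcal{H}_\pm$ by $\mathcal{H}_\triangle=\langle[\G^c_\pm\;\G^b_\pm],\emptyset,\cc_\pm,[\A^c_\pm\;\A^b_\pm],\emptyset,\bb_\pm\rangle$ keeps the same number of factors and the same constraint and generator matrices. The only change is that the binary factor becomes a continuous factor ranging over $[-1,1]\supseteq\{-1,1\}$. Hence the Algorithm~\ref{alg:2} set is the same parametric map as the Algorithm~\ref{alg:1} set, with some factors' domains enlarged from $\{-1,1\}$ to $[-1,1]$.

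The proof then proceeds by induction over $(t,\ell)$ in the order of the loops, with the following hypothesis. Every factor assignment admissible for the exact HZ, meaning it satisfies the constraints with binary factors equal to $\pm1$, is also admissible for the relaxed HZ, and it produces the same point.

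Linear maps, generalized intersections (via the stacked form \eqref{eqhzinter}) and constrained products only append constraints and generators, and they act identically on the shared factors. The hypothesis is therefore preserved; the only thing to check is that the constrained product remains well defined in Algorithm \ref{alg:2}. That follows exactly as in Lemma \ref{lem:pair-set}, because the constraint nesting structure is unchanged.

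Part (i) follows by taking $\tilde{\mathcal{S}}_{x,t}^{N_b}=\mathcal{C}(\X,\tilde{\mathcal{H}}^{(0)}_t)$ and combining it with Proposition \ref{prop1:state-pair}. Every point of $\mathcal{S}_x(\X,t)=\mathcal{S}_{x,t}$ arises from an admissible exact assignment, and that same assignment is admissible in the relaxed set. Soundness of the interval bounds guarantees $\mathcal{G}_{ReLU}$ is the exact graph, so this inclusion is the one we need.

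For (ii), suppose $N_b\ge N_t$. The intended reading is that no unstable unit relevant to steps up to $t$ is labeled $1$; I would need care here, since the ranking is global over all steps. Then $\tilde{\mathcal{G}}_{ReLU}=\mathcal{G}_{ReLU}$ for every layer up to step $t$, so Algorithm \ref{alg:2} reproduces Algorithm \ref{alg:1} exactly, and equality follows from Proposition \ref{prop1:state-pair}.

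For (iii), the ranking is by a fixed score, so the label-$0$ set for $N_b$ is contained in that for $N_b'$, with ties broken consistently. Thus $\tilde{\mathcal{S}}^{N_b'}$ is obtained from $\tilde{\mathcal{S}}^{N_b}$ by restricting additional factors from $[-1,1]$ to $\{-1,1\}$. The same lifted-induction argument, with $N_b'$ in place of the exact algorithm, gives $\tilde{\mathcal{S}}^{N_b'}\subseteq\tilde{\mathcal{S}}^{N_b}$.

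The main obstacle is the constrained-product step. There I would argue explicitly via the factor parametrization: the shared block $\A_z$ in \eqref{equ:cp} forces both components to use the same factor vector. Because of that, enlarging a factor domain enlarges the product's admissible set consistently, even though the product is not monotone in its set arguments alone.
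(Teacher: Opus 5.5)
Your argument for (i) and (iii) is correct. It rests on the same fact as the paper's proof: the relaxed ReLU graph contains the exact one, and this containment survives the rest of Algorithm~\ref{alg:2}. The difference is in how you propagate that fact.

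The paper argues on point sets: $\tilde{\mathcal{G}}_{ReLU}\supseteq\mathcal{G}_{ReLU}$, and ``the set operations used in Algorithm~\ref{alg:2} are monotone with respect to set inclusion.'' You are right that this does not literally hold for the constrained product, which depends on the shared factors and not only on the operand sets. For example, let $\mathcal{Z}$ and $\mathcal{Y}$ both equal $[-1,1]$, generated by one common factor $\xi$ as $z=\xi$ and $y=\xi$ (resp.\ $y=-\xi$). Then $\mathcal{C}(\mathcal{Z},\mathcal{Y})$ is the diagonal (resp.\ anti-diagonal) of $[-1,1]^2$. So the paper's one-line justification is informal exactly at the coupling step.

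Your lifted induction is what makes the claim rigorous. Algorithm~\ref{alg:2} builds the same generator and constraint matrices as Algorithm~\ref{alg:1} run with the same precomputed bounds, except that some factor domains are enlarged from $\{-1,1\}$ to $[-1,1]$. The same mechanism gives (iii) once ties in the ranking are broken consistently, and it does not even need $\mathcal{H}_\triangle$ to be the geometric triangle. Note that the paper's proof of (iii) ends with the inclusion written the wrong way round; your direction matches the statement. The paper's version is shorter, but yours is the one that holds up at the constrained product.

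Your hesitation about (ii) is justified, and it cannot be argued away: as written, (ii) is false for $t<T$.
\begin{itemize}
\item Line~2 of Algorithm~\ref{alg:2} ranks unstable units over all loop iterations $1,\dots,T-1$.
\item $\tilde{\mathcal{S}}_{x,t}^{N_b}$ depends only on the units from iterations $1,\dots,t-1$. These are the ones $N_t$ counts, cumulatively, as the complexity formula for $\mathcal{S}_{x,t}$ shows.
\item Suppose at least $N_t$ units from later iterations outscore some early unit. Then with $N_b=N_t$ that early unit gets label $1$, and $\tilde{\mathcal{S}}_{x,t}^{N_b}$ is in general a strict superset of $\mathcal{S}_x(\X,t)$.
\end{itemize}
For instance, take $T=3$, one low-score unstable unit in iteration $1$ that feeds $\x_2$, one high-score unstable unit in iteration $2$, and $N_b=N_2=1$.

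The paper's proof simply asserts that $N_b\ge N_t$ means ``all unstable ReLUs are represented exactly,'' which is the same unjustified step. The hypothesis you actually need is that no unit from iterations $1,\dots,t-1$ carries label $1$. This is guaranteed by $N_b\ge N_T$, so (ii) holds as stated for $t=T$. State (ii) with that hypothesis rather than appealing to an ``intended reading.'' Your observation that Algorithm~\ref{alg:2} then coincides with Algorithm~\ref{alg:1} completes the proof.
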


\begin{proof}
Properties (i) and (ii) follow immediately from the fact that $\tilde{\mathcal{G}}_{ReLU}(\mathcal{I}_{h,t}^{(\ell)},{label}_{h,t}^{(\ell)}) \supseteq {\mathcal{G}}_{ReLU}(\mathcal{I}_{h,t}^{(\ell)})$, $\tilde{\mathcal{G}}_{ReLU}(\mathcal{I}_{y,t},{label}_{y,t}) \supseteq {\mathcal{G}}_{ReLU}(\mathcal{I}_{y,t})$ and the set operations used in the Algorithm \ref{alg:2} are monotone with respect to set inclusion. Therefore, we have $\mathcal{S}_x(\X,t)\subseteq \tilde{\mathcal{S}}_{x,t}^{N_b}$. Moreover, if $N_b\ge N_t$, then all unstable ReLUs are represented exactly which yields $\tilde{\mathcal{S}}_{x,t}^{N_b}=\mathcal{S}_x(\X,t)$.
For property (iii), as the relaxed ReLU units are selected based on their corresponding sorted scores, the set of relaxed units strictly decreases as the binary limit parameter grows. Since the exact ReLU graph is a subset of its relaxation, replacing some relaxed graphs (under $N_b$) by exact ones (under $N_b'$) shrinks the resulting set. Therefore $\tilde{\mathcal{S}}_{x,t}^{N_b'} \supseteq \tilde{\mathcal{S}}_{x,t}^{N_b}$.
\end{proof}


Naturally, the over-approximated state–pair sets induce over-approximations of the reachable sets, as formalized in the following theorem. The proof follows directly from Proposition \ref{prop2:over-state-pair} and is therefore omitted.
\begin{theorem}\label{thm:relax}
Let $\{\tilde{\mathcal{S}}_{x,t}^{N_b}\}^T_{t=2}$ be the over-approximations of the state-pair sets over the domain $\mathcal{X}$ computed using Algorithm \ref{alg:2} with binary limit parameter $N_b$, then for every $t\in\{2,\dots,T\}$, the following statements hold true:\\
(i) Given the initial set $\mathcal{X}_1 \subseteq \mathcal{X} $, an over-approximated FRS can be computed by $\tilde{\mathcal{R}}_t(\mathcal{X}_1) = [\bm 0_{n\times n} \; \bm I_n]\cdot (\tilde{\mathcal{S}}_{x,t}^{N_b}\cap_{[\bm I_n \; \bm 0_{n\times n}]} \X_1) \supseteq {\mathcal{R}}_t(\mathcal{X}_1)$.\\
(ii) Given the target set $\mathcal{T} \subseteq \mathcal{X} $, an over-approximated BRS can be computed by $\tilde{\mathcal{P}}_t(\mathcal{T}) = [\bm I_n \; \bm 0_{n\times n}]\cdot (\tilde{\mathcal{S}}_{x,t}^{N_b}\cap_{[\bm 0_{n\times n}\; \bm I_n]} \mathcal{T}) \supseteq {\mathcal{P}}_t(\mathcal{T})$.\\
(iii) The over-approximation in (i) and (ii) becomes exact when $N_b\geq N_t$.\\
(iv) The sizes of $\tilde{\mathcal{R}}_t(\mathcal{X}_1)$ and $\tilde{\mathcal{P}}_t(\mathcal{T})$ decrease monotonically with $N_b$.
\end{theorem}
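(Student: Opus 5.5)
The plan is to derive all four statements from Proposition~\ref{prop2:over-state-pair} and the exact characterization in Theorem~\ref{thm1}. The one extra ingredient is that the two post-processing operations — generalized intersection with a fixed HZ under a fixed matrix, followed by a linear projection — are monotone with respect to set inclusion in the state-pair argument.

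\textbf{Step 1 (monotonicity lemma).} For sets $\mathcal{S}\subseteq\mathcal{S}'\subset\mathbb{R}^{2n}$, any set $\mathcal{Y}$, and matrices $\bm R$, $\bm M$, I will show $\bm M(\mathcal{S}\cap_{\bm R}\mathcal{Y})\subseteq \bm M(\mathcal{S}'\cap_{\bm R}\mathcal{Y})$. This is immediate from the set-theoretic definitions $\mathcal{S}\cap_{\bm R}\mathcal{Y}=\{\bm s\in\mathcal{S}\mid \bm R\bm s\in\mathcal{Y}\}$ and $\bm M\mathcal{S}=\{\bm M\bm s\mid \bm s\in\mathcal{S}\}$. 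The HZ identity of Lemma~\ref{lemma:inter} is only a representation of these sets, so the resulting objects are again HZs.

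\textbf{Step 2 (statements (i) and (ii)).} By Proposition~\ref{prop2:over-state-pair}(i), $\tilde{\mathcal{S}}^{N_b}_{x,t}\supseteq\mathcal{S}_x(\X,t)$. By Proposition~\ref{prop1:state-pair}, $\mathcal{S}_x(\X,t)=\mathcal{S}_{x,t}$.
\begin{itemize}
\item For (i), apply Step~1 with $\mathcal{Y}=\X_1$, $\bm R=[\bm I_n\;\bm 0]$, $\bm M=[\bm 0\;\bm I_n]$. This gives $\tilde{\mathcal{R}}_t(\X_1)\supseteq[\bm 0\;\bm I_n](\mathcal{S}_{x,t}\cap_{[\bm I_n\;\bm 0]}\X_1)$, and the right-hand side equals $\mathcal{R}_t(\X_1)$ by Theorem~\ref{thm1}(i).
\item For (ii), use $\mathcal{Y}=\mathcal{T}$, $\bm R=[\bm 0\;\bm I_n]$, $\bm M=[\bm I_n\;\bm 0]$, and conclude with Theorem~\ref{thm1}(ii).
\end{itemize}

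\textbf{Step 3 (statement (iii)).} If $N_b\ge N_t$, Proposition~\ref{prop2:over-state-pair}(ii) gives $\tilde{\mathcal{S}}^{N_b}_{x,t}=\mathcal{S}_x(\X,t)=\mathcal{S}_{x,t}$. Substituting this into the formulas of (i) and (ii) reproduces exactly the expressions in Theorem~\ref{thm1}, so both inclusions become equalities.

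\textbf{Step 4 (statement (iv)).} For $N_b\le N_b'$, Proposition~\ref{prop2:over-state-pair}(iii) gives $\tilde{\mathcal{S}}^{N_b}_{x,t}\supseteq\tilde{\mathcal{S}}^{N_b'}_{x,t}$. Applying Step~1 with this pair of state-pair sets, and the same $\mathcal{Y}$, $\bm R$, $\bm M$ as in Step~2, yields $\tilde{\mathcal{R}}^{N_b}_t(\X_1)\supseteq\tilde{\mathcal{R}}^{N_b'}_t(\X_1)$, and likewise for $\tilde{\mathcal{P}}_t$.

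\textbf{Main obstacle.} The only delicate point is that the inclusions must be read as inclusions of the sets the HZs represent, not as inclusions of their generator matrices. Step~1 is therefore stated at the set level; Lemma~\ref{lemma:inter} is used solely to guarantee that the resulting sets are HZs. A second detail is that (iii) requires $N_b$ to be at least the number of unstable units relevant at step $t$. Here I would note that with $N_b\ge N_t$ every unstable ReLU used in building $\tilde{\mathcal{S}}_{x,t}$ is labeled $0$, which is what Proposition~\ref{prop2:over-state-pair}(ii) already asserts.
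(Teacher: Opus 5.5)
Your proposal is correct and is essentially the argument the paper leaves implicit when it says the proof follows directly from Proposition~\ref{prop2:over-state-pair}: generalized intersection and linear projection are monotone at the level of sets, and you combine this with Proposition~\ref{prop2:over-state-pair}(i)--(iii) and the exact formulas of Theorem~\ref{thm1}. One caution on your closing gloss: Line~2 of Algorithm~\ref{alg:2} ranks unstable units over all steps up to $T-1$, so $N_b\ge N_t$ guarantees that every unit feeding $\tilde{\mathcal{S}}_{x,t}$ is labeled $0$ only when $t=T$ (for $t<T$, a later-step unit with a larger score can displace an earlier one); your (iii) is therefore exactly as strong as Proposition~\ref{prop2:over-state-pair}(ii), a limitation you inherit from the paper rather than introduce in your deduction.
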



\begin{example}\label{exp1}
We consider a simple closed-loop RNN system to illustrate the size of the over-approximated reachable sets decreases monotonically with the binary limit parameter $N_b$. Specifically, we train a one-layer RNN with 10 hidden neurons to approximate a double integrator system controlled by an MPC policy. Figure~\ref{fig:simpleRNN} shows the over-approximated FRSs at step 5 (i.e., $\tilde{\mathcal{R}}_5$), for five different values of $N_b$.
\end{example}

\begin{figure}[!h]
    \centering
        \includegraphics[width=0.9\linewidth]{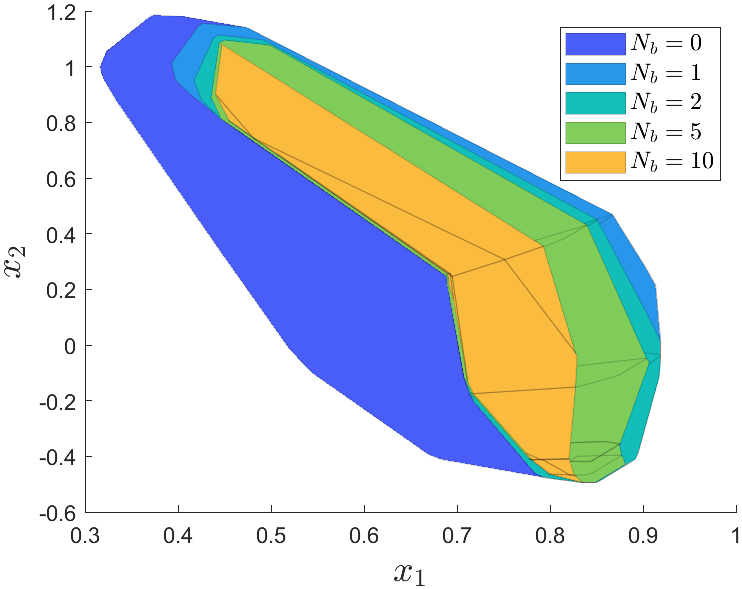}
        \caption{Over-approximated FRSs for a closed-loop RNN in Example \ref{exp1}, with varying values of binary limit parameter $N_b$. The size of the over-approximated reachable sets decreases monotonically as $N_b$ increases. }
    \label{fig:simpleRNN}
    \vskip -3mm
\end{figure}

\begin{remark}
Assume $N_b\leq N_t$, the set complexity of $\tilde{\mathcal{R}}_t(\mathcal{X}_1)$ is given by 
$\tilde n_{g,r} = n_{g,x} + 5 N_t - N_b +n_{g,1}$, $\tilde n_{b,r} = n_{b,x} + N_b+n_{b,1}$, $\tilde n_{c,r} = n_{c,x} + 3 N_t+n+n_{c,1}$,
and the set complexity of $\tilde{\mathcal{P}}_t(\mathcal{T})$ is given by 
$\tilde n_{g,p} = n_{g,x} + 5 N_t - N_b+n_{g,\tau}$, $\tilde n_{b,p} = n_{b,x} + N_b+n_{b,\tau}$, $\tilde n_{c,p} = n_{c,x} + 3 N_t+n + n_{c,\tau}$.
\end{remark}

\begin{remark}
The triangle relaxation technique has been widely used in prior work on ReLU-activated neural networks (e.g., \cite{zhang2024reachability,choi2025reachability,singh2019abstract}). Most existing methods, however, perform the ranking and selection of relaxed ReLU units on a layer by layer basis, guided by heuristic metrics. In contrast, our approach ranks ReLU units across all layers and preserves those with the highest triangle-area scores for exact representation. This formulation enables us to address the relaxation problem for the entire RNN in a single stage while directly controlling the complexity of the resulting reachable sets via a binary limit parameter. Moreover, our framework can readily incorporate other ReLU-ranking heuristics, such as BABSR \cite{bunel2020branch} and FSB \cite{de2021improved}. 
\end{remark}

\subsection{Safety Verification of Closed-loop RNN }\label{sec:application}

In this subsection,  we use
the reachable sets computed earlier to  derive a sufficient condition
for safety verification and to  identify unsafe sequences
in closed-loop RNNs.


Consider an initial set $\X_1 \subset \X$ and an unsafe set $\mathcal{O} \subset \X$ such that $\X_1 \cap \mathcal{O} = \emptyset$. The safety verification problem is to determine whether all trajectories of the closed-loop system \eqref{equ:sys}, starting from $\X_1$, remain outside $\mathcal{O}$ over a time horizon $T$. Assuming both $\X_1$ and $\mathcal{O}$ are represented as HZs, this problem can naturally be addressed using either forward or backward reachability analysis.

For the forward reachability approach, we take the initial set $\X_1$ as the input in Algorithm \ref{alg:2} and compute a sequence over-approximated FRSs $\tilde{\mathcal{R}}_2(\X_1), \tilde{\mathcal{R}}_3(\X_1), \dots, \tilde{\mathcal{R}}_T(\X_1)$. Safety verification then reduces to checking whether these sets intersect with $\mathcal{O}$. By Lemma \ref{lemma:inter} and \cite[Lemma 1]{zhang2023backward}, this amounts to checking the emptiness of the constructed HZ intersections, which requires solving $T-1$ MILPs.

Alternatively, applying Algorithm \ref{alg:2} and Theorem \ref{thm:relax} to the unsafe set $\mathcal{O}$ yields a sequence of over-approximated BRSs $\tilde{\mathcal{P}}_2(\mathcal{O}), \tilde{\mathcal{P}}_3(\mathcal{O}), \dots, \tilde{\mathcal{P}}_T(\mathcal{O})$. Safety can then be verified by checking whether these sets intersect the initial set $\X_1$.

The safety verification of closed-loop RNN systems is summarized in the following proposition, whose proof is omitted due to space limitations.

\begin{proposition}\label{prop:safety-verify}
    Suppose that an initial state set $\mathcal{X}_1 \subset \mathcal{X}$ and an unsafe set $\mathcal{O}\subset \mathcal{X}$ are both HZs. Let $\{\tilde{\mathcal{R}}_t(\X_1)\}_{t=2}^T$ and $\{\tilde{\mathcal{P}}_t(\mathcal{O})\}_{t=2}^T$ be the approximated forward and backward reachable sets computed using Algorithm \ref{alg:2} and Theorem \ref{thm:relax} with binary limit parameter $N_b$, then all the trajectories of the closed-loop RNN system \eqref{equ:sys} starting from $\mathcal{X}_1$ can avoid the unsafe region $\mathcal{O}$ within $T$ steps, if either:
    
    (i) For $t = 2,\dots,T$, $\tilde{\mathcal{R}}_t(\X_1) \cap \mathcal{O} = \emptyset$.

    (ii) For $t = 2,\dots,T$, $\tilde{\mathcal{P}}_t(\mathcal{O}) \cap \mathcal{X}_1 = \emptyset$.
\end{proposition}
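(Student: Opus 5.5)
The argument is by contradiction in both cases, using the over-approximation properties in Theorem~\ref{thm:relax}(i)--(ii) together with the definitions of the exact FRS and BRS. Suppose some trajectory starting at $\x_1\in\X_1$ enters $\mathcal{O}$ within $T$ steps, i.e., there is $t\in\{1,\dots,T\}$ with $\x_t\in\mathcal{O}$, where $\x_{k+1}=\mt{\pi}(\x_k)$ for $k\in[t-1]$. The case $t=1$ is excluded at once by the standing assumption $\X_1\cap\mathcal{O}=\emptyset$. Hence we may assume $t\in\{2,\dots,T\}$.

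For condition (i), the definition of the exact FRS gives $\x_t\in\mathcal{R}_t(\X_1)$. Since $\X_1\subseteq\X$, Theorem~\ref{thm:relax}(i) then gives $\x_t\in\tilde{\mathcal{R}}_t(\X_1)$. Therefore $\x_t\in\tilde{\mathcal{R}}_t(\X_1)\cap\mathcal{O}$, which contradicts (i).

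For condition (ii), note that $\x_1\in\X_1\subseteq\X$ and $\x_t\in\mathcal{O}$. By the definition of the BRS (with target set $\mathcal{O}\subseteq\X$), this means $\x_1\in\mathcal{P}_t(\mathcal{O})$. Theorem~\ref{thm:relax}(ii) then gives $\x_1\in\tilde{\mathcal{P}}_t(\mathcal{O})$. Therefore $\x_1\in\tilde{\mathcal{P}}_t(\mathcal{O})\cap\X_1$, which contradicts (ii).

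The only step needing care is that the trajectory remain consistent with the domain $\X$ used to build the state-pair sets. For the BRS this is exactly how $\mathcal{P}_t$ is defined: only $\x_1\in\X$ is required, and $\x_1\in\X_1\subseteq\X$. The state-pair set $\mathcal{S}_x(\X,t)$ likewise constrains only $\x_1\in\X$. So no invariance of $\X$ under $\mt{\pi}$ is needed, and the argument goes through directly.
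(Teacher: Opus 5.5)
Your proof is correct and matches the argument the paper intends; the paper itself omits the proof for space. The exact FRS and BRS are contained in the relaxed sets by Theorem~\ref{thm:relax}(i)--(ii), so empty relaxed intersections force empty exact ones, and by the definitions of $\mathcal{R}_t$ and $\mathcal{P}_t$ this rules out any violating trajectory. You are also right that the $t=1$ case is not covered by conditions (i)--(ii) and must come from the standing assumption $\X_1\cap\mathcal{O}=\emptyset$ stated just before the proposition.
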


If $N_b \geq N_t$, the two conditions above are both necessary and sufficient, as the computed reachable sets are exact.

For safety verification problems, forward reachability is generally preferable since $\X_1$ can be directly used as the input set in Algorithm \ref{alg:2}. As $\X_1$ is usually much smaller than the full domain $\X$, this avoids unnecessary complexity in the reachable sets. By contrast, backward reachability typically requires an a priori set, often chosen as $\X$, to construct the associated state-pair sets. Nonetheless, if safety verification fails (i.e., there exist trajectories from $\X_1$ that reach $\mathcal{O}$ within $T$ steps), one can use BRSs to explicitly construct unsafe trajectories by choosing any initial state that can lead to safety violations and propagate it forward through the RNN. Specifically, if ${\mathcal{P}}_t(\mathcal{O}) \cap \X_1 \neq \emptyset$, then the unsafe state sequences can be represented as $\mathcal{S}_{unsafe} = \{(\x_1,\x_2,\dots,\x_t) \in\mathbb{R}^{t\cdot n}\;|\; \x_1 \in {\mathcal{P}}_t(\mathcal{O}) \cap \mathcal{X}_1, \x_{k+1} = \mt{\pi}(\x_k), k\in[t-1] \}$.

\section{Simulation Results}\label{sec:example}

We use the following example to demonstrate the computation of forward and backward reachable sets using Algorithm \ref{alg:2} and Theorem \ref{thm:relax}. The implementation was carried out in Python and executed on a desktop with an Intel Core i5-11400F CPU and 32GB of RAM. The code is available at \href{https://github.com/wisc-arclab/reachability-RNN-HZ}{https://github.com/wisc-arclab/reachability-RNN-HZ}.


Consider the following mass-spring-damper system consisting of 2 carts:
$$
    \dot{\x}=\mat{\bm 0_{2 \times 2} & \bm I_{2} \\
-M^{-1} K & -M^{-1} C} \x+\mat
{\bm 0_{2 \times 2} \\
M^{-1}}\u,
$$
where the state $\x = [x_1 \;x_{2}\;\dot{x}_1\;\dot{x}_{2}]^\top\in\mathbb{R}^{4}$ contains the position and velocity of the carts, the control input $\u = [f_1\;f_{2}]^\top\in\mathbb{R}^{2}$ combines the external forces applied on each cart, and $K,M,C$ are parameter matrices the same as those given in \cite{jouret2023safety}. The continuous-time dynamics is discretized with a sampling time of $\Delta t = 0.1$ seconds. An MPC controller is designed to stabilize the carts around the equilibrium point while enforcing state and input constraints. Using randomly generated sequences of system states and inputs, we train a one-layer RNN with $4$ memory units to approximate the discretized state-space model, and another one-layer RNN with $8$ memory units to approximate the MPC controller. These two RNNs can be stacked into a two-layer RNN with $\{4,8\}$ memory units, yielding a closed-loop RNN system in the form of \eqref{equ:sys}.

We then compute FRSs up to $T=5$ using Theorem~\ref{thm:relax}, starting from the initial set $\X_1 = \li -1.5,-1\ri \times \li 0.5,1\ri \times \li 0.25,0.5\ri \times \li 0.25,0.5\ri$. For $t=2,\dots,5$, the numbers of unstable ReLU units are $N_t = 3,5,6,9$, respectively. Setting $N_d=9$ yields the exact FRSs $\mathcal{R}_2(\X_1),\dots,\mathcal{R}_5(\X_1)$, while setting $N_d=2$ produces the over-approximated FRSs. Figure~\ref{fig:forward5} shows the projections of these sets onto the $x_1$–$x_2$ plane. As expected, the exact reachable sets lie entirely within their over-approximations.

To demonstrate the use of BRSs, we specify an unsafe region $\mathcal{T} = \li -0.1,0.1\ri \times \li -0.04,0.04\ri \times \li 0.06,0.12\ri \times \li -0.36,-0.28\ri$ and compute a 5-step BRS $\mathcal{P}_5(\mathcal{T})$ which contains all unsafe initial states in $\X_1$ that can reach the unsafe region in 5 time steps. Based on this, we construct the unsafe sequence set $\mathcal{S}_{unsafe} = \mathcal{P}_5(\mathcal{T})\times \mathcal{R}_2(\mathcal{P}_5(\mathcal{T})) \times \mathcal{R}_3(\mathcal{P}_5(\mathcal{T}))\times \mathcal{R}_4(\mathcal{P}_5(\mathcal{T}))\times\mathcal{R}_5(\mathcal{P}_5(\mathcal{T}))$, as shown in Figure \ref{fig:backward}.


\begin{figure}[!h]
    \centering
        \includegraphics[width=0.95\linewidth]{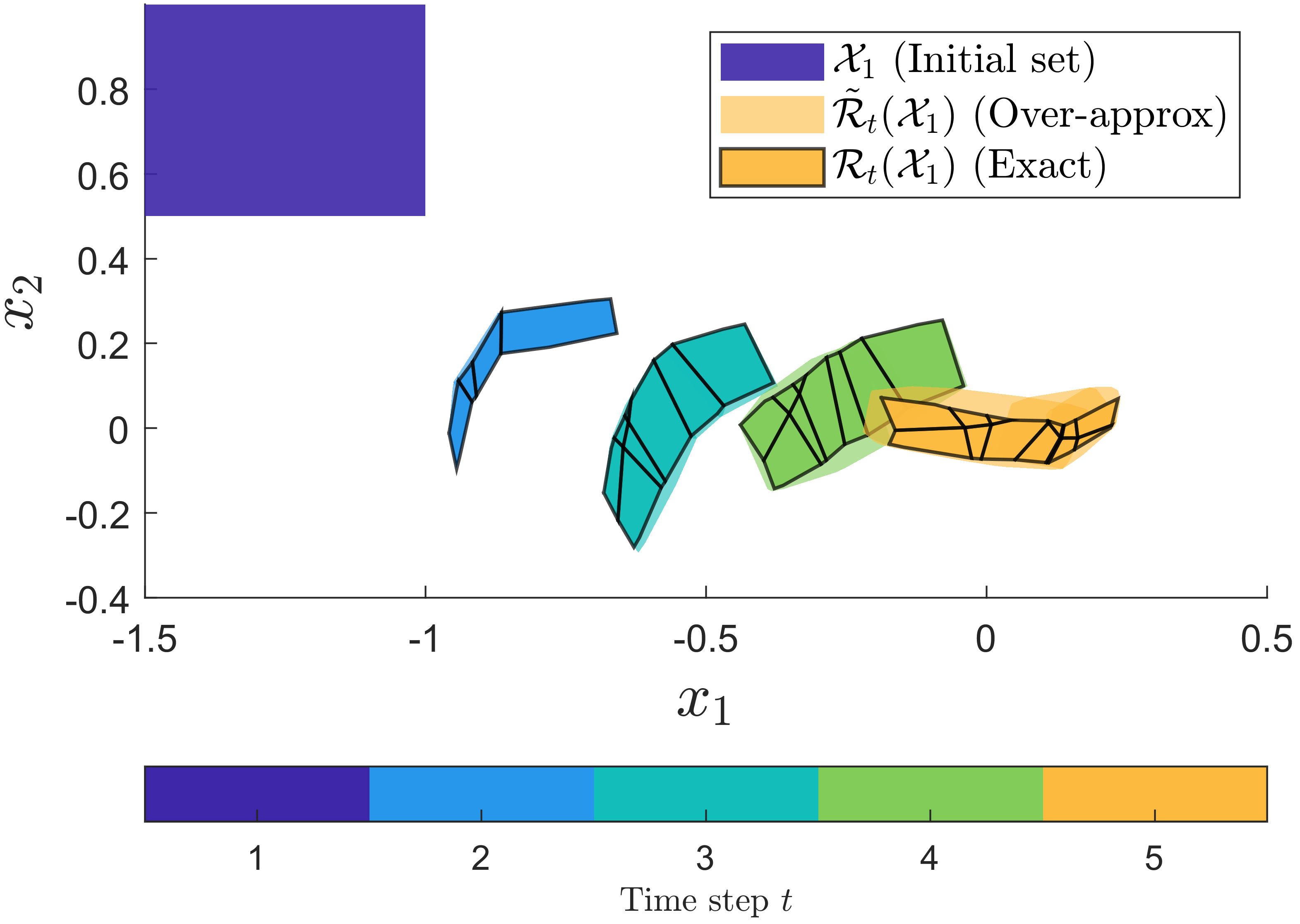}
        \caption{FRSs of the mass–spring–damper system: exact sets shown with black boundaries, and over-approximated sets shown without boundaries.}
    \label{fig:forward5}
\end{figure}

\begin{figure}[!h]
    \centering
        \includegraphics[width=0.95\linewidth]{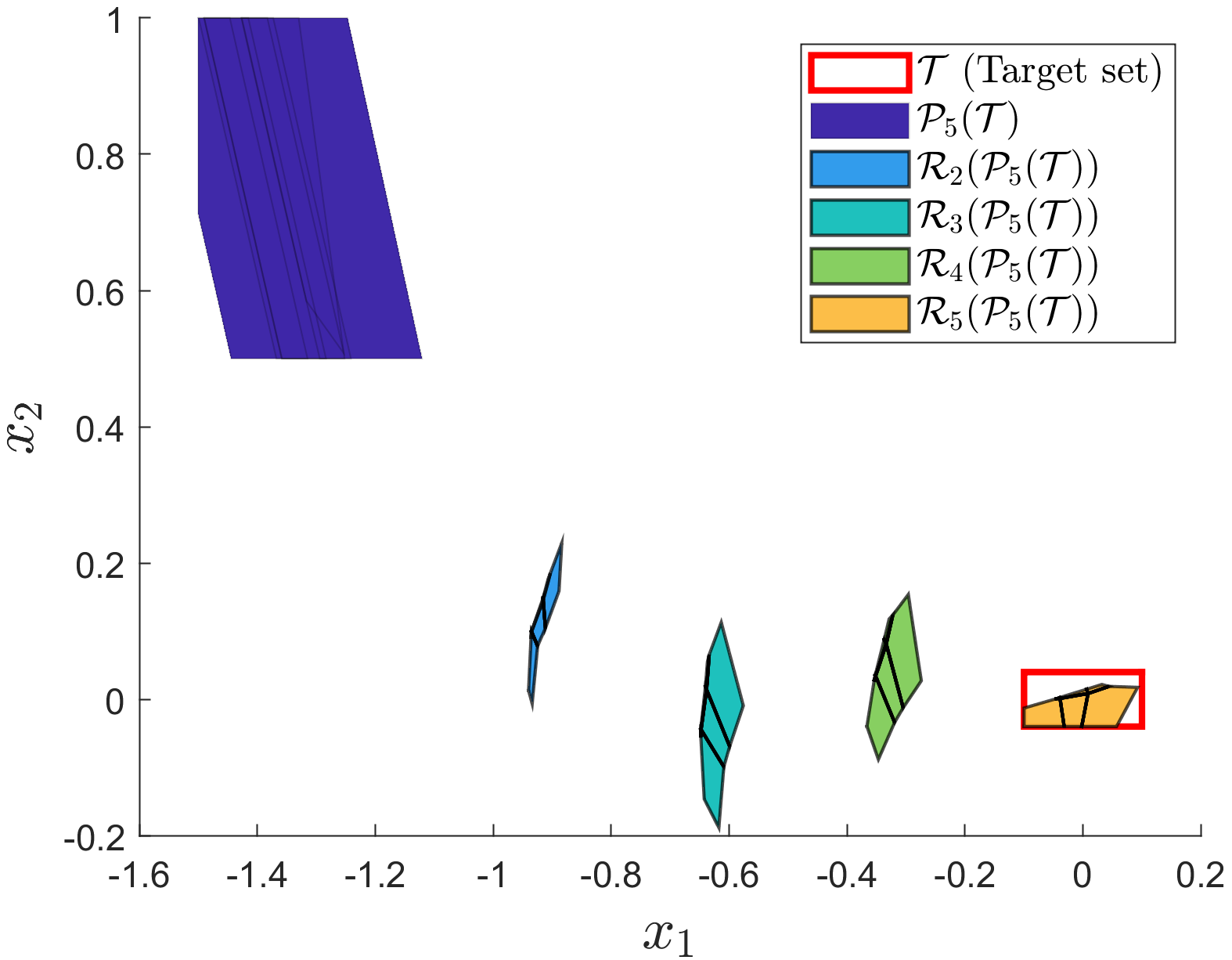}
        \caption{BRSs and the corresponding unsafe sequence set.}
    \label{fig:backward}
\end{figure}

\section{Conclusion}\label{sec:concl}
We proposed a hybrid zonotope–based framework for the forward and backward reachability analysis of closed-loop RNN systems. By constructing state-pair sets, the method derives exact HZ representations of RNN systems without unrolling. A tunable relaxation scheme was introduced based on triangle-area ranking to enable explicit trade-offs between computational complexity and approximation accuracy. We also proposed a sufficient condition for the safety verification of closed-loop RNN systems via forward and backward reachable sets. 

While this work focuses on systems in which both the dynamics and the controller are represented by RNNs, the approach can be readily extended to broader settings, including nonlinear plant models and other neural network architectures such as FNNs and CNNs studied in our prior work \cite{zhang2024reachability,zhang2025efficient}. In addition, improving the computational efficiency will be another direction for future work.


\bibliographystyle{IEEEtran}
\bibliography{ref}

\end{document}